\documentclass{amsart}

\usepackage[margin=1.0in]{geometry}

\usepackage[utf8]{inputenc}
\usepackage{amssymb,amscd,amsthm, verbatim,amsmath,color,fancyhdr, mathrsfs}
\usepackage{graphicx}
\usepackage{xfrac}
\usepackage{tikz, float, tikzscale}
\usepackage{bbm}
\usepackage[hidelinks]{hyperref}
\usepackage{physics}
\usepackage{breqn}
\usepackage{amsmath}

\usepackage{mathtools}
\usepackage{pythonhighlight} %For typesetting Python

\newtheorem{thm}{Theorem}
\newtheorem*{thm*}{Theorem}
\newtheorem*{lem*}{Lemma}

\newtheorem{lem}[thm]{Lemma}
\newtheorem{rem}[thm]{Remark}
\newtheorem*{rem*}{Remark}
\newtheorem{cor}[thm]{Corollary}
\newtheorem*{cor*}{Corollary}

\newtheorem{problem}[thm]{Problem}

\newtheorem*{bur}{Burgess's bound for short character sums}

\begin{document}

\title{Quantitative Khintchine on the Parabola with Non-monotonic Approximation Functions} %--------------Title???----------------

\author{Maiken Gravgaard}
\address{Maiken Gravgaard, Department of
	Mathematics, Ian Wand Building, University of York, YO10 5GH York, United Kingdom}
\email{maiken@balmangravgaard.dk} 
\author{Simon Kristensen}
\address{Simon Kristensen, Department of Mathematics, Aarhus
	University, Ny Munkegade 118, 8000 Aarhus C, Denmark.}
\email{sik@math.au.dk}
\thanks{Funded by Aarhus University Research
	Foundation, grant no. AUFF-E-2021-9-20}
	
\begin{abstract} %--------ABSTRACT--------
  We prove a quantitative version of the convergence case of
  Khintchine's celebrated theorem in metric Diophantine approximation,
  but where the approximated points are restricted to lying on the
  parabola. A novel feature of our result is that unlike other results
  in literature, the approximating function is no longer required to
  be monotonic. This requires us to obtain explicit constants in
  classical number theoretic results, most notably in Burgess' bound
  for character sums in short intervals.
\end{abstract}

\maketitle	

\section{Introduction}\label{SectionIntroduction} %--------INTRODUCTION--------

Begin by recalling the notion of \emph{\(\psi\)-approximable
points}. Let $\psi$ be a function $\psi:\mathbb{N}\to [0,\infty)$,
  which we will hereby refer to as the \emph{approximation
  function}. Let $n\in\mathbb{N}$, then we define the set of
  \emph{simultaneously \(\psi\)-approximable points} in $\mathbb{R}^n$
  as follows
\begin{equation*}
  \mathcal{S}_n(\psi) :=\left\{(x_1,\dots, x_n)\in \mathbb{R}^n \mid
  \exists_{\infty} (p_1,\dots, p_n, q)\in\mathbb{Z}^n\times\mathbb{N}
  \text{ s.t. } \left\lvert x_i - \frac{p_i}{q} \right\rvert <
  \frac{\psi(q)}{q} ~ , 1\leq i\leq n\right\}.
\end{equation*}
By Dirichlet's Theorem \cite{Cassels1965}, $\mathcal{S}_{n}(q\mapsto
q^{-\frac1n}) = \mathbb{R}^n$, and for functions decaying faster than
$q\mapsto q^{-\frac1n}$ Khintchine proved his famous 0--1-law, which
we state a version of below:
\begin{thm*}[Khintchine, 1924]
  $\lambda_n$--almost no points are simultaneously $\psi$--well
  approximable if the series $\sum_{q=1}^{\infty} \psi(q)^n$
  converges.  If we additionally assume, that the approximation
  function $\psi$ is monotonic, then $\lambda_n$-almost every point is
  simultaneously $\psi$--well approximable, provided the series
  $\sum_{q=1}^{\infty} \psi(q)^n$ diverges.
\end{thm*}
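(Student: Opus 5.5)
The convergence half follows from the first Borel--Cantelli lemma, and the divergence half from a quasi-independence argument combined with a zero--one law. Throughout, $\mathcal{S}_n(\psi)$ is invariant under integer translations, so it suffices to work in the unit cube $[0,1]^n$.

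\emph{Convergence.} For $q\in\mathbb{N}$ let
\[
A_q=\bigcup_{0\le p_1,\dots,p_n\le q}\prod_{i=1}^n\Bigl(\tfrac{p_i}{q}-\tfrac{\psi(q)}{q},\tfrac{p_i}{q}+\tfrac{\psi(q)}{q}\Bigr)\cap[0,1]^n .
\]
Then
\[
\lambda_n(A_q)\le (q+1)^n\Bigl(\tfrac{2\psi(q)}{q}\Bigr)^n\le 4^n\psi(q)^n .
\]
Now $\mathcal{S}_n(\psi)\cap[0,1]^n$ is contained in $\limsup A_q$. The only subtlety is a fixed $q$ with infinitely many numerators $p$, but this forces $\psi(q)/q$ to be unbounded, which is excluded once $\psi(q)\to0$ along the relevant $q$. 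If $\sum\psi(q)^n<\infty$, Borel--Cantelli gives $\lambda_n(\limsup A_q)=0$. No monotonicity is used here.

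\emph{Divergence.} Assume $\psi$ is monotone and $\sum\psi(q)^n=\infty$. First reduce to the case $\psi(q)\le 1$, which is harmless by monotonicity and Dirichlet. Next pass to the sets
\[
A_q^*=\bigcup_{\substack{0\le p_i\le q\\ \gcd(p_1,\dots,p_n,q)=1}}\text{(boxes as above)}.
\]
Their measures are disjoint unions up to boundary effects and are comparable to $\frac{\varphi_n(q)}{q^n}\psi(q)^n$, where $\varphi_n(q)=\#\{(p_i)\bmod q:\gcd=1\}\gg q^n$ for $n\ge2$. For $n=1$ one uses instead that monotonicity gives $\sum\varphi(q)\psi(q)/q=\infty$, via a dyadic block comparison with the Cauchy condensation $\sum 2^k\psi(2^k)$.

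The key step, and the main obstacle, is the quasi-independence estimate
\[
\sum_{q,r\le N}\lambda_n(A_q^*\cap A_r^*)\le C\Bigl(\sum_{q\le N}\lambda_n(A_q^*)\Bigr)^2 \quad\text{for infinitely many } N.
\]
To prove it, I would restrict to a dyadic subsequence: by monotonicity, divergence is inherited by blocks $2^k\le q<2^{k+1}$, where $\psi$ is essentially constant. For $q\ne r$, I would count pairs of reduced rationals with $|p/q-p'/r|<2\psi(q)/q$ coordinatewise. Writing $d=\gcd(q,r)$, each coordinate contributes a count of $\ll d+ r\psi(q)$ solutions. This gives
\[
\lambda_n(A_q^*\cap A_r^*)\ll \psi(q)^n\psi(r)^n+\Bigl(\tfrac{\psi(r)\,d}{r}\Bigr)^n .
\]
Summing the error term over $q,r$ uses $\sum_{q,r\le N} d^n/(qr)^{\dots}$ bounds; for $n\ge2$ these are small relative to the main term. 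The case $n=1$ is the genuinely delicate one, where I would follow the classical Cassels/Gallagher overlap estimate.

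\emph{Conclusion.} The Chung--Erd\H{o}s (Kochen--Stone) lemma then gives $\lambda_n(\limsup A_q^*)>0$. Cassels' or Gallagher's zero--one law, which says that $\limsup A_q^*$ has measure $0$ or full measure, upgrades this to full measure, finishing the divergence part.
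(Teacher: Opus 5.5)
The paper does not prove this statement. Khintchine's theorem is quoted in the introduction as classical background, so there is no argument in the paper to compare yours with. The closest ingredient is the first-moment union bound in Section 3, which is the same mechanism as your convergence half.

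\textbf{Convergence half.} This is correct and complete. Your remark about a fixed $q$ with infinitely many numerators is muddled, but the conclusion is right. For fixed $\mathbf{x}$ and $q$ at most $(2\psi(q)+1)^n$ vectors $\mathbf{p}$ are admissible, so infinitely many pairs already force infinitely many $q$. Numerators outside $[0,q]$ can be replaced by $0$ or $q$, and no decay of $\psi$ is needed.

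\textbf{The gap at $n=1$.} The divergence half has a genuine gap at $n=1$, which is Khintchine's original theorem. You defer this case, and your own overlap bound cannot deliver it. For $n=1$ it reads $\lambda(A_q^*\cap A_r^*)\ll\psi(q)\psi(r)+\gcd(q,r)\psi(r)/r$. Since $\sum_{q\le r}\gcd(q,r)=\sum_{d\mid r}d\,\varphi(r/d)$ has average order $\asymp r\log r$, the error term sums to about $\sum_{r\le N}\psi(r)\log r$. For $\psi(r)=1/(r\log r)$ this is $\asymp\log N$, against $(\sum_{q\le N}\lambda(A_q^*))^2\asymp(\log\log N)^2$.

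The missing observation is that your count never uses reducedness. For $q<r$, two reduced fractions $p/q\neq p'/r$ give $m=pr-p'q\neq0$. So $m$ ranges over \emph{nonzero} multiples of $d=\gcd(q,r)$ with $|m|<2r\psi(q)$. There are at most $4r\psi(q)/d$ such $m$ (none if $2r\psi(q)\le d$), and each gives at most $d$ pairs $(p,p')$. Hence $\lambda(A_q^*\cap A_r^*)\le 8\psi(q)\psi(r)$, with no $\gcd$ term at all. Quasi-independence then follows from $\sum_{q\le N}\psi(q)\ll\sum_{q\le N}\varphi(q)\psi(q)/q$. This is where monotonicity really enters, via $\sum_{2^k\le q<2^{k+1}}\varphi(q)/q\gg 2^k$ applied to a dyadic step minorant of $\psi$.

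\textbf{Repairs needed for $n\ge2$.} Your outline works here, but three steps need fixing.

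\emph{Dyadic constancy.} A monotone $\psi$ is not essentially constant on dyadic blocks; it may drop by any factor inside one. Replace it by the step minorant equal to $\psi(2^{k+1})$ on $[2^k,2^{k+1})$. This stays monotone, keeps $\sum\psi(q)^n$ divergent by Cauchy condensation, and only shrinks $\mathcal{S}_n(\psi)$.

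\emph{The case $n=2$.} The step minorant is essential here, not cosmetic. We have $\sum_{q<r}\gcd(q,r)^2\le r\sigma(r)$ with $\sigma(r)=\sum_{d\mid r}d$, which is not $\ll r^2$ pointwise. So the error term $\sum_{r\le N}\psi(r)^2\sigma(r)/r$ is controlled by $\sum_{r\le 2N}\psi(r)^2$ only after averaging $\sigma(r)/r$ over blocks on which $\psi$ is constant. For $n\ge3$ the pointwise bound $\sum_{d\mid r}d^{\,n-1}\le\zeta(n-1)r^{n-1}$ suffices.

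\emph{Zero--one law.} The zero--one laws of Cassels and Gallagher are one-dimensional. For $n\ge2$ you need a higher-dimensional analogue, such as the zero--one law of Beresnevich and Velani. Alternatively, localise your counting to an arbitrary box $B$ to get $\lambda_n(B\cap\limsup A_q^*)\gg\lambda_n(B)$ uniformly, and finish with the Lebesgue density theorem.
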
 
Here, $\lambda_n$ is the $n$-dimensional Lebesgue measure.  Note here,
that we do not assume monotonicity of the approximation function in
the convergence case! (It is however both assumed and indeed needed in
the divergence case, see \cite{DuffinSchaeffer1941}).

The present work deals with Diophantine approximation on
manifolds. Here, one restricts the point $\mathbf{x}$ to be on some
manifold, which is itself a Lebesgue null set with respect to the
Lebesgue measure on the ambient space, but which nonetheless carries a
natural measure. In this case, geometric conditions on the manifold
play an important role. For instance, if the manifold in question is
a coordinate hyperplane, one would only be observing Diophantine
phenomena from the lower dimensional space. The usual way of avoiding
this is to assume that the manifold 'curves out of every affine
subspace', a notion which can be formalised as an explicit
non-degeneracy condition.

In \cite{BeresneveichYang2023} Beresnevich and Yang show the
convergence case for non-degenerate submanifolds $\mathcal{M}\subset
\mathbb{R}^n$ for monotonic $\psi$, and it was recently shown by
Beresnevich, Datta and Yang \cite{arXiv:2602.11045} that a full (and
even more general) version of Khintchine's theorem holds for such
manifolds. However, it is critical for these results that the
approximation function is monotonic, even in the convergence case.

As pointed out in \cite{Adiceametal2016}, an immediate consequence of
Beresnevich and Yang's theorem is
\begin{cor}[Corollary 1 in \cite{Adiceametal2016}]\label{Corollary}
  Let $\psi$ be monotonic such that $\sum_{q=1}^{\infty} \psi(q)^n$
  converges. Then for almost every (with respect to the natural
  measure) $\mathbf{x}\in\mathcal{M}$, there exists a
  $\kappa(\mathbf{x})>0$ such that
  \begin{equation*}
    \lVert q\mathbf{x} \rVert > \kappa(\mathbf{x})\psi(q),
  \end{equation*}
  for all $q\in\mathbb{N}$.
\end{cor}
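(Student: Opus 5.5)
The plan is to deduce the corollary from the Beresnevich--Yang convergence theorem, applied not to $\psi$ itself but to a slightly enlarged monotonic function whose series still converges. For the function $\psi$ itself, Beresnevich--Yang only gives $\lVert q\mathbf{x}\rVert \geq \psi(q)$ for all sufficiently large $q$, and that does not produce a single constant working for all $q$. The idea is to exploit the freedom to multiply $\psi$ by a factor that tends to infinity slowly while preserving convergence.

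First, since $\sum_q \psi(q)^n<\infty$, I choose a nondecreasing sequence $\theta(q)\to\infty$ with $\sum_q \theta(q)^n\psi(q)^n<\infty$. This is the standard construction: pick $N_1<N_2<\dots$ with tails $\sum_{q\geq N_k}\psi(q)^n\le 4^{-k}$, and set $\theta(q)=2^{k/n}$ on $[N_k,N_{k+1})$. Then $\tilde\psi:=\theta\psi$ still has a convergent series.

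The one point needing care is monotonicity. Since $\psi$ is monotonic with a convergent series, it is nonincreasing. The product $\theta\psi$ need not be monotone, so I replace it by $\tilde\psi(q):=\sup_{m\geq q}\theta(m)\psi(m)$, which is nonincreasing and dominates $\theta\psi$. The difficulty is then that $\tilde\psi^n$ might not be summable. To avoid this, I choose $\theta$ to grow slowly enough that $\theta\psi$ is already essentially nonincreasing. A cleaner alternative is to use the Cauchy condensation form: for nonincreasing $\psi$, convergence is equivalent to $\sum_k 2^k\psi(2^k)^n<\infty$. I pick $\theta$ constant on dyadic blocks as above, with the $N_k$ powers of $2$, and define $\tilde\psi$ on $[2^j,2^{j+1})$ to be $\theta(2^{j})\psi(2^j)$, then monotonize it by taking suprema over later blocks. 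Choosing the $\theta$ increments to be small relative to the condensed tail sums makes the monotonized sum at most a constant times $\sum_j 2^j\theta(2^j)^n\psi(2^j)^n<\infty$. I expect this monotonicity-preserving choice of $\theta$ to be the main technical step, although it is elementary.

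With $\tilde\psi$ in hand, Beresnevich--Yang applied to $\tilde\psi$ gives, for almost every $\mathbf{x}\in\mathcal{M}$, some $q_0(\mathbf{x})$ such that $\lVert q\mathbf{x}\rVert\geq\tilde\psi(q)\geq\theta(q)\psi(q)\geq\psi(q)$ for all $q\geq q_0$, since $\theta\geq 1$. I discard the further null set of $\mathbf{x}$ having a rational coordinate, or more generally with $q\mathbf{x}\in\mathbb{Z}^n$ for some $q$; this set is null for non-degenerate manifolds, where $\mathcal{M}$ is not contained in any rational affine hyperplane. For the remaining points, $\lVert q\mathbf{x}\rVert>0$ for each of the finitely many $q<q_0$. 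I then take
\[
\kappa(\mathbf{x})=\tfrac12\min\Bigl\{1,\ \min_{q<q_0,\ \psi(q)>0}\lVert q\mathbf{x}\rVert/\psi(q)\Bigr\}.
\]
This gives the strict inequality for every $q$: when $\psi(q)=0$ it holds since $\lVert q\mathbf{x}\rVert>0$, and for $q\ge q_0$ with $\psi(q)>0$ we have $\lVert q\mathbf{x}\rVert\geq\psi(q)>\kappa\psi(q)$. In fact the last step does not even need $\theta$, since $\kappa\le 1/2$; so a simpler proof applies Beresnevich--Yang directly to $\psi$ and skips the enlargement altogether. I would present this direct version, keeping $\theta$ only as a remark.
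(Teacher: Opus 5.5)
Your direct version is correct and is exactly the deduction the paper intends: the paper gives no separate argument, only calling the statement an immediate consequence of the Beresnevich--Yang convergence theorem, which gives $\lVert q\mathbf{x}\rVert\ge\psi(q)$ for all $q\ge q_0(\mathbf{x})$ off a null set, after which you shrink $\kappa(\mathbf{x})$ to cover the finitely many $q<q_0(\mathbf{x})$. As you noticed, the $\theta$-enlargement is unnecessary, and the only extra null set you need to remove is the countable set $\mathcal{M}\cap\mathbb{Q}^n$, so no appeal to rational hyperplanes is required.
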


Applications of this often require that $\kappa(\mathbf{x})$ is
independent of $\mathbf{x}$, which is impossible to guarantee on a set
of full Lebesgue measure. In \cite{Adiceametal2016} Adiceam,
Beresnevich, Levesley, Velani and Zorin state the following problem
\begin{problem}[Problem in \cite{Adiceametal2016}]\label{Problem}
  Investigate the dependency between $\kappa>0$ and the probability of
  the set of vectors $\mathbf{x}\in\mathcal{M}$ where
  $\kappa(\mathbf{x})=\kappa$.
\end{problem}
This problem was addressed by Datta in \cite{Datta2024}, who found a lower estimate on the measure of the set of points in
$\mathcal{M}$ satisfying the conditions of Corollary \ref{Corollary},
but with $\kappa$ being a fixed, small number. Again, for her proof to
work, it is imperative that the approximation function $\psi$ is
monotonic.

Removing monotonicity from Khintchine's theorem is in general a hard
problem, and in fact it is strictly necessary for $n=1$ where a
different volume sum is needed in the non-monotonic case, see the
celebrated paper of Koukoulopoulos and Maynard
\cite{zbMATH07239275}. For manifolds, even removing it in the case of
convergence is hard and often requires the manifold to be very special
indeed. To the best of our knowledge, the only result of its kind is
the following, due to Huang \cite{Huang2020}.

\begin{thm}[Theorem 1 in \cite{Huang2020}]\label{Huangthm1}
  Let $\psi$ be \emph{any} approximation function, such that
  $\sum_{q=1}^{\infty} \psi(q)^2$ converges. Then almost no points on
  the parabola $\mathcal{P}:= \{(x,x^2)\mid x\in[0,1]\}$ are
  simultaneously $\psi$--well approximable.
\end{thm}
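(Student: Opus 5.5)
The plan is a Borel--Cantelli argument: we count rational points near the parabola, and we control that count with Gauss sums, or equivalently with the number of square roots modulo $q$.

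\textbf{Setup.} For $q\in\mathbb{N}$, let $A_q$ be the set of $x\in[0,1]$ for which there exist $p_1,p_2$ with
\[
\lvert x-p_1/q\rvert<\psi(q)/q \quad\text{and}\quad \lvert x^2-p_2/q\rvert<\psi(q)/q .
\]
The goal is to show $\sum_q \lambda(A_q)<\infty$; Borel--Cantelli then gives the claim. We may assume $\psi(q)\le 1$ and even $\psi(q)\to 0$, since $\sum_q\psi(q)^2<\infty$ forces this. Suppose $x\in A_q$ with $p_1=a$. Then
\[
\lvert a^2/q^2-p_2/q\rvert\ll \psi(q)/q ,
\]
that is, $\lvert a^2-qp_2\rvert\ll q\psi(q)$, so $a^2\equiv r \pmod q$ for some $\lvert r\rvert\ll q\psi(q)$. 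Hence
\[
\lambda(A_q)\ll \frac{\psi(q)}{q}\,N_q, \qquad N_q:=\#\bigl\{0\le a\le q:\ a^2 \bmod q\in[-C q\psi(q),\,Cq\psi(q)]\bigr\}.
\]

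\textbf{Counting $N_q$.} Write $\delta=\psi(q)$. I expect the bound
\[
N_q\ll q\delta + \text{error}, \qquad \text{error} \ll q^{1/2+\varepsilon},
\]
and a sharper form of it. To get this, detect the interval condition with a smooth bump on $\mathbb{Z}/q\mathbb{Z}$ and expand in additive characters. The main term is $\delta q$. The frequency-$h$ term is a complete Gauss sum $\sum_a e(ha^2/q)$, which has size $\sqrt{q\gcd(h,q)}$, weighted by Fourier coefficients that are about $\delta$ for $\lvert h\rvert\lesssim 1/\delta$. This gives
\[
N_q\ll q\delta+\delta\sum_{\lvert h\rvert\le 1/\delta}\sqrt{q\gcd(h,q)}\ll q\delta+\sqrt{q}\,d(q)\cdot(\text{something}).
\]
Alternatively, count directly: each residue $r$ has at most $2^{\omega(q)}\sqrt{\gcd(r,q)}$-ish square roots, so $N_q\ll\sum_{\lvert r\rvert\le Cq\delta}\rho_q(r)$. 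The $r=0$ term is about $\sqrt q$.

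\textbf{Summing, and the main obstacle.} The contribution of the main term is $\sum_q \frac{\delta}{q}\,q\delta=\sum_q\psi(q)^2<\infty$, as required. The difficulty is the error term, which contributes $\sum_q \psi(q)\,q^{-1/2+\varepsilon}$, and this need not converge for non-monotonic $\psi$. Cauchy--Schwarz gives
\[
\sum_q \psi(q)\,q^{-1/2+\varepsilon}\le \Bigl(\sum_q\psi(q)^2\Bigr)^{1/2}\Bigl(\sum_q q^{-1+2\varepsilon}\Bigr)^{1/2},
\]
which diverges, but only barely. So the plan must gain a little more. The first thing I would try is to exploit that we only need the measure near points with small $r$. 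When $\psi(q)<q^{-1/2}$, I would use the trivial bound $N_q\ll\sum_{\lvert r\rvert\le Cq\delta}\rho_q(r)$ together with the averaged count $\sum_{\lvert r\rvert\le R}\rho_q(r)\ll R\,d(q)+\sqrt q$. Next, I would split $q$ according to the size of $\psi(q)$: the set where $\psi(q)\ge q^{-1/2+\eta}$ is handled by the main term, and the set where $\psi(q)$ is small needs the sharper count of near-zero squares. That count amounts to counting lattice points on the parabola $a^2=qp_2+r$, and I would control it via the divisor bound, summing over $r$ first rather than $q$, since for fixed small $r$ the number of $(a,q)$ with $q\mid a^2-r$ is a divisor sum.

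I expect this last exchange of summation order to be the genuine obstacle: the error terms must come out summable without any monotonicity on $\psi$.
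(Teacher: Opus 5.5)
Your frame matches the one the paper takes over from Huang: Borel--Cantelli, the count $N_q$, detection by additive characters, and the observation that an error of size $q^{1/2+\varepsilon}$ in $N_q$ is not summable. But the proposal stops exactly where the essential input is needed, and the two devices you offer for the remaining range do not close the gap.

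\textbf{The hole near $\psi(q)\approx q^{-1/2}$.} Your two regimes do not meet.
\begin{itemize}
\item The complete Gauss-sum bound disposes of $\psi(q)\ge q^{-1/2+\eta}$, since $\psi(q)q^{-1/2+\varepsilon}\le\psi(q)^2q^{-\eta+\varepsilon}$.
\item Your trivial count gives $\lambda(A_q)\ll\psi(q)^2d(q)+\psi(q)q^{-1/2}$. This is summable when $\psi(q)\le q^{-1/2-\eta}$, but not for all $\psi(q)<q^{-1/2}$.
\end{itemize}
At $\psi(q)=q^{-1/2}$ both bounds are of size $d(q)/q$. Because $\psi$ is arbitrary, it can be concentrated on moduli with many prime factors, where such divisor losses are large. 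For instance, take $\psi(q)=q^{-1/2}$ on $\bigcup_k\{P_km: m\le e^{P_k/k^2}\}$, with $P_k$ the product of the first $k$ primes. Then $\sum_q\psi(q)^2<\infty$ but $\sum_q\psi(q)^2d(q)=\infty$.

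The same example rules out your exchange of summation. Counting pairs $(a,q)$ with $q\mid a^2-r$ by a divisor sum is an average over moduli. However, the weight $\psi(q)/q$ and the range $|r|\le Cq\psi(q)$ are tied to $q$ through an arbitrary square-summable sequence, so no averaging over $q$ is available. What is required is an estimate for each single $q$: $N_q\ll q\psi(q)$, plus a square-part term, plus an error a power of $q$ smaller than $q\psi(q)$. It must have no divisor loss on the main term and be valid for $\psi(q)$ down to a power of $q$ strictly below $q^{-1/2}$.

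\textbf{The missing ingredient is Burgess's bound for short character sums.} It enters exactly where you take absolute values of the Gauss sums, which throws away the oscillation you need.
\begin{itemize}
\item Use a Fej\'er kernel of length $J\asymp 1/\psi(q)$. Write $j=dj_1$ and $q=dq_1$ with $\gcd(j_1,q_1)=1$.
\item Evaluate $\sum_{a\le q_1}\exp(2\pi i j_1a^2/q_1)$ exactly by Lemma \ref{huanglemma1}. The frequency sum becomes $2\sqrt{q_1}\sum_{j_1\le u}\chi(j_1)$, where $\chi$ is a Jacobi or Kronecker symbol of modulus $q_1$ or $4q_1$, principal only when $q_1$ is a square.
\item Bounding this character sum trivially reproduces your $q^{1/2+\varepsilon}$. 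P\'olya--Vinogradov gives an error of at least $\log q$ times the main term.
\item Burgess, $\sum_{j\le u}\chi(j)\ll\sqrt{u}\,q_1^{3/16+\varepsilon}$ (valid for every modulus), gives after Abel summation
\[
N_q\ll q\psi(q)+s\,d(s)+\psi(q)^{1/2}q^{11/16+\varepsilon},
\]
where $s$ is the largest integer with $s^2\mid q$.
\end{itemize}

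\textbf{Finishing the argument.} You may assume $\psi(q)\ge q^{-5/8+\eta}$ for a fixed $0<\eta<1/8$. To do so, replace $\psi$ by $\max\{\psi(q),q^{-5/8+\eta}\}$, which is still square-summable and only enlarges the exceptional set. Then:
\begin{itemize}
\item The Burgess term contributes $\psi(q)^{3/2}q^{-5/16+\varepsilon}\le\psi(q)^2q^{-\eta/2+\varepsilon}$ to $\lambda(A_q)$.
\item Using $d(s)\ll_\beta s^{\beta}$, the square-part term contributes $\sum_q\psi(q)s^{1+\beta}/q\le\bigl(\sum_q\psi(q)^2\bigr)^{1/2}\bigl(\sum_q s^{2+2\beta}q^{-2}\bigr)^{1/2}<\infty$ for $\beta<1/2$.
\end{itemize}
In short, Burgess moves the usable threshold from $q^{-1/2}$, where $\sum_q q^{-1}$ diverges, to $q^{-5/8}$, where $\sum_q q^{-5/4}$ converges. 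So neither a small-$\psi$ regime nor an exchange of summation is needed.
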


As in \cite{Adiceametal2016} and \cite{Datta2024}, one can formulate
statements analogous to Corollary \ref{Corollary} and Problem
\ref{Problem}, which is exactly what we aim to address.

Define
\begin{equation}
  \mathcal{B}'(\psi, \kappa) := \left\{x\in[0,1] : \max\left(\lVert
  qx\rVert, \lVert q x^2\rVert\right)> \kappa \psi(q) \text{ for all }
  q\in\mathbb{N}\right\}
\end{equation}
We give two versions of a quantitative Khintchine's Theorem on the
parabola with non-monotonic approximation functions for general
composite $q$. The first, Theorem \ref{thmallq}, holds for all $q$, at
the cost of the constants appearing in $\kappa(\delta)$ being
\emph{very} small!  The other version, Theorem \ref{thmlargeq}, has
more reasonable constants, but this comes at the cost of only
considering \emph{very} large $q$ with large prime divisors.

Additionally we give two versions, where we assume that $q$ does not
have too many divisors (namely $q=p_1\cdots p_n$ for a fixed $n$ in
Theorem \ref{thmp1...pn} and $q=p$ in Theorem \ref{thmprime}).

\begin{thm}[All $q$]\label{thmallq}
  Let $\psi:\mathbb{N}\to [0, \infty)$ be any function such that
    $S_{\psi}:=\sum_{q=1}^{\infty} \psi(q)^2$ converges. Given any
    $0<\delta<1$, let
    
    \begin{align*}
      \kappa(\delta) < \min
      &\left\{\frac{1}{6\max_{n\in\mathbb{N}}\psi(n)} ,
      \frac{\delta}{\sqrt{S_{\psi}}} \frac{2 \sqrt{6}
        (5\log(2))^4}{3\pi^3 \sqrt{\zeta\left(\frac{12}{11}\right)}
        11^4}, \sqrt{\frac{\delta}{S_{\psi}}} \frac{1}{\sqrt{18}\pi},
      \right.\nonumber\\ &\left. {}
      \left(\frac{\delta}{S_{\psi}}\right)^{\frac23}
      \left(\frac{1}{2\pi^2 \sqrt{12}} \right)^{\frac23}\Bigg(
      99684\cdot 4.19253564643679 \right.\nonumber\\ & \left. {}
      +27\cdot 4^{\frac{3}{16} + \frac{0.11237+0.11237^2}{2}}
      3^{\frac52} \left(e^{e^8}-10^5\right) e^{\frac{5\log(2)
          e^8}{2(8-1.39177)}}\left(\frac{1}{e\cdot
        0.11237}\right)^{1+\frac{0.11237}{2}} \right.\nonumber\\ &
      \left. {} +5\cdot
      4^{\frac{3}{16}+\frac{0.00218819+0.00218819^2}{2} +
        \frac{5}{2\cdot 57}}
      \left(\frac{57}{\log(2)}\right)^{\frac{7\cdot
          176846309399143769411680}{2}} \left(\frac{1}{e\cdot
        0.00218819}\right)^{1+\frac{0.00218819}{2}}\Bigg)^{\frac{-2}{3}}
      \right\} \\ \simeq \min & \left\{
      \frac{1}{6\max_{n\in\mathbb{N}}\psi(n)}
      ,\frac{\delta}{\sqrt{S_{\psi}}} 0.00015248265228 ,
      \sqrt{\frac{\delta}{S_{\psi}}} 0.07502635967975,
      \left(\frac{\delta}{S_{\psi}}\right)^{\frac23}
      10^{-10^{23.89775276641198}}\right\}.
    \end{align*}
    
    Then 
    \begin{equation*}
      \lambda\left(\mathcal{B}'(\psi, \kappa)\right) \geq 1-\delta.
    \end{equation*}
\end{thm}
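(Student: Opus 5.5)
We bound the measure of the complement. Since $\kappa<1/(6\max\psi)$, we have $\kappa\psi(q)<1/6$ for every $q$, so each bad event is a genuine short-interval condition. The complement of $\mathcal{B}'(\psi,\kappa)$ is contained in $\bigcup_q A_q$, where
\begin{equation*}
A_q=\{x\in[0,1]: \lVert qx\rVert\le \kappa\psi(q),\ \lVert qx^2\rVert\le\kappa\psi(q)\}.
\end{equation*}
By subadditivity it suffices to show $\sum_q \lambda(A_q)\le\delta$. The goal is a per-$q$ bound of the shape
\begin{equation*}
\lambda(A_q)\ll \kappa^2\psi(q)^2 + (\text{error terms in }\kappa\psi(q)).
\end{equation*}
Summing this against $S_\psi$ will produce the three $\delta/S_\psi$-type conditions in the minimum.

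\textbf{Step 1: reduce to counting rationals.} Write $\varepsilon=\kappa\psi(q)$. If $x\in A_q$, then $x=(a+\theta)/q$ with $|\theta|\le\varepsilon$, and $qx^2=a^2/q+2a\theta/q+\theta^2/q$. So $x\in A_q$ forces $a^2/q$ to lie within $O(\varepsilon)$ of an integer, i.e. $\lVert a^2/q\rVert\lesssim 3\varepsilon$ (this is where the bound $\varepsilon<1/6$ keeps the perturbation small). Hence
\begin{equation*}
\lambda(A_q)\le \frac{2\varepsilon}{q}\,\#\{0\le a<q:\ a^2 \bmod q \in[-3\varepsilon q,3\varepsilon q]\}.
\end{equation*}
This is Huang's approach in Theorem \ref{Huangthm1}, made explicit.

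\textbf{Step 2: count squares in a short interval modulo $q$.} The count is $\sum_{|b|\le 3\varepsilon q}\rho_q(b)$, where $\rho_q(b)$ is the number of square roots of $b$ modulo $q$. Expand $\rho_q$ multiplicatively via Jacobi/Legendre symbols over the prime powers dividing $q$; for $b$ sharing a factor with $q$, handle the square part of $\gcd(b,q)$ separately. This yields:
\begin{enumerate}
\item a main term of order $\varepsilon q$, giving the contribution $\varepsilon^2$ up to an explicit constant;
\item a contribution from $b$ with large $\gcd(b,q)$, including $b=0$, bounded via divisor-type sums. Its explicit handling should give the $\zeta(12/11)$ and $(5\log 2)^4/11^4$ constants, coming from an explicit divisor bound $d(n)\le C n^{1/11}$ and Cauchy--Schwarz. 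This produces the term linear in $\kappa$, i.e. $\delta/\sqrt{S_\psi}$.
\item the oscillating part $\sum_b \chi(b)$ over the interval, for nonprincipal characters $\chi$ of conductor dividing $q$.
\end{enumerate}
The $\sqrt{\delta/S_\psi}$ term should arise from the trivial regime where $\varepsilon q$ is small, so that only $b=0$ or $O(1)$ values of $b$ occur.

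\textbf{Step 3: character sums, the main obstacle.} We need $\sum_{|b|\le H}\chi(b)=o(H)$ with $H=\varepsilon q$ possibly as short as $q^{1/4+\eta}$. The Pólya--Vinogradov inequality is too weak here, so we need Burgess's bound with fully explicit constants valid for general (composite, not necessarily cube-free handled by reduction) moduli. I would take Burgess with $r=2$ or $3$, giving
\begin{equation*}
\Big|\sum_{b\le H}\chi(b)\Big|\le C\,H^{1/2}q^{3/16+o(1)},
\end{equation*}
and make every $o(1)$ explicit. The $o(1)$ factors come from explicit divisor-function and $\omega(q)$ bounds, which is where $e^{e^8}$, $0.11237$ and the huge exponent enter; these appear to be a two-range split (small $q$ versus $q\ge e^{e^8}$) of the divisor bound.

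Inserting this gives $\lambda(A_q)\ll \varepsilon\cdot H^{1/2}q^{3/16+o(1)}/q$. One then balances against the other cases: when this exceeds the main term, bound the count trivially instead. After summing over $q$ and optimising, this produces a term $\ll C\kappa^{3/2}\psi(q)^2$ (using $\psi\le 1/(6\kappa)$ to convert powers of $\psi$). Requiring its total over $q$ to be at most $\delta/3$ gives the final $(\delta/S_\psi)^{2/3}$ condition with the enormous constant.

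Finally, allot roughly $\delta/3$ to each of the three contributions (main term, gcd term, character term). This yields $\sum_q\lambda(A_q)\le\delta$ and hence $\lambda(\mathcal{B}'(\psi,\kappa))\ge1-\delta$.

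The hard part is Step 3: obtaining a completely explicit Burgess bound uniform in composite $q$, with the unavoidable explicit divisor and $\omega(q)$ estimates. The rest is bookkeeping.
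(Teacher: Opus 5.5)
The genuine gap is in Step 3, where you turn the Burgess error into something summable against $S_\psi$. Your per-$q$ error is $\lambda(A_q)\ll \varepsilon H^{1/2}q^{3/16+c}/q\asymp \kappa^{3/2}\psi(q)^{3/2}q^{-5/16+c}$. This is the same shape the paper obtains from the frequency side. Bounding it by $\kappa^{3/2}\psi(q)^2$ needs $\psi(q)^{-1/2}\le q^{5/16-c}$, i.e.\ a \emph{lower} bound on $\psi(q)$; the upper bound $\psi\le 1/(6\kappa)$ points the wrong way.

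Falling back on a trivial count does not rescue small $\psi(q)$ either. It gives $\lambda(A_q)\le 2\kappa\psi(q)$, or $\lambda(A_q)\ll\kappa^2\psi(q)^2\max_b\rho_q(b)$. Here $\max_b\rho_q(b)$ can be of size $2^{\omega(q)}$, or $r$ at $b=0$, where $r^2$ is the largest square dividing $q$. Neither bound is summable under the sole hypothesis $\sum\psi(q)^2<\infty$. Your remark that $H$ may be as short as $q^{1/4+\eta}$ is a symptom of this: nothing in your argument bounds $H$ from below, and with exponent $3/16$ Burgess only beats the trivial bound for $H>q^{3/8}$.

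The missing idea is the paper's opening reduction. Replace $\psi$ by $\tilde\psi(q)=\max\{\psi(q),q^{-5/8+\eta}\}$ with $0<\eta<1/8$. This only enlarges the bad set and keeps $\sum\tilde\psi(q)^2$ finite. Then $\tilde\psi(q)^{3/2}q^{-5/16+c}\le\tilde\psi(q)^2$ as long as the total exponent $c$ lost to the explicit divisor and logarithm bounds satisfies $2c\le\eta$. (Done strictly, this replaces $S_\psi$ and $\max\psi$ by the corresponding quantities for $\tilde\psi$.)

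Beyond this, your Step 2 is a genuinely different, dual route. The paper never counts $\rho_q(b)$. It majorises the indicator of $\lVert a^2/q\rVert\le 3\kappa\psi(q)$ by $\frac{\pi^2}{4}\mathcal{F}_J(a^2/q)$ with $J=\lfloor 1/(6\kappa\psi(q))\rfloor$ and splits frequencies by $d=\gcd(j,q)$. It evaluates the quadratic Gauss sums as $\sqrt{q_1}$ times a Jacobi symbol. It then applies Abel summation and explicit Burgess to character sums over the frequency $j_1\le J/d$. In that picture:
\begin{itemize}
\item The zero frequency contributes $q/J$ with $J^{-1}\le 12\kappa\psi(q)$. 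This gives $6\pi^2\kappa^2S_\psi$ and the $\sqrt{\delta/S_\psi}$ condition; it is your main term, not a regime where $\varepsilon q$ is small.
\item The square $q_1$, where the symbol is principal, give $\sum\sqrt{q_1}\le r\,d(r)\le K_{5/11}r^{16/11}$. After Cauchy--Schwarz this is the linear term with $\zeta(12/11)$ and $(11/(5\log 2))^4$.
\end{itemize}
Your route trades Gauss sums for bookkeeping of $\rho_q(b)$ at non-coprime $b$ and at the prime $2$, which you leave unspecified. It would also produce different constants; since the theorem asserts specific ones, you would still have to check yours are no worse.

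Several of your guesses about the constants are off. $e^{e^8}$ and $10^5$ are the switch points between the explicit Burgess inputs: Bordignon's constants $5$ and $27$, and the computer check. $0.11237$ and $0.00218819$ are the exponents in $\log q\le q^{\varepsilon}/(e\varepsilon)$. The enormous exponent is $\pi(10^{25})$, coming from Tao's divisor bound with $\beta=1/57$.
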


\begin{thm}[Large $q$]\label{thmlargeq}
  Let $\psi:\mathbb{N}\to [0, \infty)$ be any function such that
    $S_{\psi}:=\sum_{q=1}^{\infty} \psi(q)^2$ converges. Assume
    additionally, that $\psi(q)=0$ for all $q\leq
    \exp(\exp(41))$. Given any $\delta > 0$, let
    \begin{align*}
      \kappa(\delta) < \min&\left\{
      \frac{1}{6\max_{n\in\mathbb{N}}\psi(n)}, \frac{\delta}{
        \sqrt{S_{\psi}}} \frac{2}{3\pi^2
        \sqrt{\frac{\pi^2}{6}\zeta\left(\frac{112}{57}\right)}},
      \sqrt{\frac{\delta}{S_{\psi}}} \sqrt{\frac{1}{18\pi^2}}
      \right. \\ & \left. {}
      \left(\frac{\delta}{S_{\psi}}\right)^{\frac23}
      \left(2\pi^2\sqrt{12}\left(4.19253564643679+ 27\cdot
      4^{\frac{3}{16}+\frac{5}{2\cdot 57} +
        \frac{\frac{41}{e^{41}}+\left(\frac{41}{e^{41}}\right)^2}{2}}
      \right)\right)^{\frac{-2}{3}}\right\} \\ \simeq \min&\left\{
      \frac{1}{6\max_{n\in\mathbb{N}}\psi(n)},
      \frac{\delta}{\sqrt{S_{\psi}}} 0.04064400,
      \sqrt{\frac{\delta}{S_{\psi}}} 0.07502635,
      \left(\frac{\delta}{S_{\psi}}\right)^{\frac23}0.00499238\right\}.
	\end{align*}
    Then
    \begin{equation*}
      \lambda\left(\mathcal{B}'(\psi,\kappa)\right)\geq 1-\delta.
    \end{equation*}
\end{thm}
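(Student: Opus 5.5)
The plan is to bound the measure of the complement of $\mathcal{B}'(\psi,\kappa)$ by a union bound over $q$ and show $\sum_q \lambda(A_q)\le\delta$, where
\begin{equation*}
A_q:=\left\{x\in[0,1] : \lVert qx\rVert\le \kappa\psi(q),\ \lVert qx^2\rVert\le\kappa\psi(q)\right\}.
\end{equation*}
Since $\psi(q)=0$ for $q\le\exp(\exp(41))$, only large $q$ contribute. Put $\varepsilon_q:=\kappa\psi(q)$; the condition $\kappa<1/(6\max\psi)$ gives $\varepsilon_q<1/6$, so the intervals around rationals below do not overlap. If $x\in A_q$, write $x=a/q+\theta$ with $|\theta|\le\varepsilon_q/q$. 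Then
\begin{equation*}
qx^2=a^2/q+2a\theta+q\theta^2,
\end{equation*}
and the condition on $\lVert qx^2\rVert$ forces $\lVert a^2/q+2a\theta\rVert\lesssim\varepsilon_q$. Following Huang's approach (Theorem \ref{Huangthm1}), I would cover $A_q$ by a union over $a\in\{0,\dots,q\}$ of sets of measure roughly $(\varepsilon_q/q)\cdot\min(1,\varepsilon_q q/a)$. The count of relevant $a$ is the number of $a$ with $a^2\equiv b \pmod q$ for $b/q$ in a short interval of length about $\varepsilon_q$, weighted suitably. Equivalently, I would detect the condition $\lVert qx^2\rVert\le\varepsilon_q$ through a smooth majorant with Fourier expansion $\varepsilon_q\sum_h \hat w(h\varepsilon_q)e(hqx^2)$.

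This yields a main term of size $\asymp\varepsilon_q^2$, whose constant comes from the normalisation with $\pi^2$. It also yields error terms involving the quadratic Gauss/Weyl sums $\sum_{a}e(ha^2/q)$ over short ranges of $a$, namely ranges of length about $q\cdot$ scale. After completing the sum, these become incomplete character-type sums modulo divisors of $q$.

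I would split the error according to $d=\gcd(h,q)$ and the size of the interval.

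First, the complete part is bounded by Gauss sums of size $\sqrt{q/d}$. Summing over $h$ and $d\mid q$ produces a term $\varepsilon_q\cdot q^{-1/2}\sum_{d\mid q}\dots$. This is controlled by a divisor-type bound. Cauchy--Schwarz against $\sum\psi(q)^2$ then gives the term linear in $\delta/\sqrt{S_\psi}$, with the constant involving $\zeta(112/57)$; the exponent $112/57$ comes from a divisor bound $d(q)\ll q^{\varepsilon}$ with explicit $\varepsilon=1/57$, valid for $q>\exp(\exp(41))$.

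Second, there is a term that is quadratic in $\kappa$ but not small in $\varepsilon_q$. It arises from the principal contribution when $a$ is small, and yields $\sqrt{\delta/S_\psi}/\sqrt{18\pi^2}$ after requiring $\sum 3\pi^2\kappa^2\psi(q)^2\cdot 6\le\delta$ or similar.

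Third, there is the short-interval contribution. Here I would use Burgess's bound, in an explicit form, for character sums of length $N$ modulo $q$:
\begin{equation*}
\left|\sum_{n\le N}\chi(n)\right|\le C\, N^{1/2}q^{3/16+\epsilon}.
\end{equation*}
Applied via the reduction of quadratic-residue counting in short intervals to character sums, this gives an error of size $\varepsilon_q^{3/2}$ times a constant. Requiring $\sum_q C'\kappa^{3/2}\psi(q)^{3/2}\cdot(\dots)\le\delta/3$, and converting via $\psi(q)^{3/2}\le\psi(q)^2/\ldots$ together with the structure of the bound, yields the $(\delta/S_\psi)^{2/3}$ condition. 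The constant $4^{3/16+\dots}$ and the $4.1925\ldots$ reflect the explicit Burgess constant and the divisor-function bound with $\epsilon=41/e^{41}$, which is valid for large $q$.

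The main obstacle is making the Burgess step fully explicit for composite, non-squarefree moduli and patching it uniformly over all divisors $d$ of $q$. The large-$q$ hypothesis is exactly what lets me use clean explicit divisor bounds ($\omega(q)$, $d(q)\le q^{\epsilon}$) with small $\epsilon$ instead of the enormous constants needed in Theorem \ref{thmallq}. Finally, I would choose $\kappa$ so that each of the three error sums is at most $\delta/3$, which gives the stated minimum.
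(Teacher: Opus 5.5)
Your sketch has a genuine gap in how the Gauss sums are handled. You bound the ``complete part'' trivially by Gauss sums of size $\sqrt{q/d}$, and treat Burgess as a separate error term. That step fails. Already for frequencies $j$ coprime to $q$, the trivial bound only gives $\#\{a\le q:\lVert a^2/q\rVert\le3\kappa\psi(q)\}\ll\sqrt q$, hence $\lambda(A_q)\ll\kappa\psi(q)q^{-1/2}$. Cauchy--Schwarz then leaves $\sum_q q^{-1}$ (times divisor factors), which diverges. Indeed $\sum_q\psi(q)q^{-1/2}$ need not converge when $\sum_q\psi(q)^2$ does (take $\psi(q)=q^{-1/2}/\log q$), so no $\zeta(112/57)$ can come out of this.

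In the paper the Gauss sums are \emph{evaluated}, not bounded. The steps are:
\begin{itemize}
\item The crude reduction $\lambda(A_q)\le 2\kappa\frac{\psi(q)}{q}\#\{a\le q:\lVert a^2/q\rVert\le3\kappa\psi(q)\}$, which uses only $|x+a/q|\le2$.
\item Detection of this count with the Fej\'er kernel $\mathcal{F}_J$, $J=\lfloor 1/(6\kappa\psi(q))\rfloor$, and splitting the frequencies by $d=\gcd(j,q)$, $q_1=q/d$.
\item Lemma~\ref{huanglemma1}, which turns the complete sums over $a\le q_1$ of $\exp(\pm2\pi i j_1a^2/q_1)$ into $2\sqrt{q_1}\chi(j_1)$ (or $0$). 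Here $\chi$ is a Jacobi/Kronecker symbol of modulus $q_1$ or $4q_1$.
\end{itemize}
The cancellation is then in the \emph{frequency} variable $j_1\le J/d$, obtained by Abel summation against the Fej\'er weights plus explicit Burgess; this is the $\kappa^{3/2}$ term. Only when $q_1$ is a perfect square is $\chi$ principal. Those $d$ contribute $\sum_{s^2\mid q}s\le r\,d(r)$, with $r^2$ the largest square dividing $q$. Writing $q=r^2t$ gives $\sum_q r^{2+2\beta}q^{-2}\le\zeta(2-2\beta)\,\pi^2/6$, and this is where $\zeta(112/57)$ comes from ($\beta=1/57$). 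This principal-character, sparsity-of-square-parts step is the missing idea. Similarly, the $\kappa^2$ term is simply the $j=0$ Fej\'er term $q/J\le12\kappa q\psi(q)$, not a contribution from small $a$.

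There are three further problems. First, your route to Burgess via squares mod $q$ in a short interval yields a single character sum only for prime $q$. For the composite $q$ allowed here, the number of square roots of $b$ mod $q$ is not $1+\chi(b)$. Nor does ``completing short Weyl sums in $a$'' produce character sums. The paper's dual route handles composite $q$ through the $\gcd(j,q)$ decomposition. The price is needing explicit Burgess for every possibly small, non-primitive modulus $q_1$ or $4q_1$. That is why the brute-force constant $4.1925\ldots$ (for $q_1\le10^5$) and the constant $27$ of Lemma~\ref{burgessmediumq} appear, together with $\log q\le q^{41/e^{41}}$, which is a log bound, not a divisor bound.

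Second, to turn $\sum_q\psi(q)^{3/2}q^{-5/16+\epsilon}$ into $S_\psi$ you need a lower bound on $\psi$. The paper assumes without loss of generality $\psi(q)\ge q^{-5/8+\eta}$, replacing $\psi$ by $\max\{\psi(q),q^{-5/8+\eta}\}$. This only shrinks $\mathcal{B}'$, although strictly it also replaces $S_\psi$ by the larger sum for the modified function. It gives $\psi(q)^{3/2}q^{-5/16+\epsilon}\le\psi(q)^2$. Without it, H\"older gives $S_\psi^{3/4}$ and a different condition on $\kappa$.

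Third, a smooth majorant would not reproduce the stated constants, which are specific to the Fej\'er kernel: $\pi^2/4$ comes from $\mathcal{F}_J\ge4/\pi^2$, $\sqrt{12}$ and $6\pi^2$ from $J^{-1}\le12\kappa\psi(q)$, and $2/3$ from $\int\sqrt{u}\,\mathrm{d}u$.
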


\begin{thm}[$q=p_1\dots p_k$]\label{thmp1...pn}
  Let $\psi:\mathbb{N}\to [0, \infty)$ be any function such that
    $S_{\psi}:=\sum_{q=1}^{\infty} \psi(q)^2$ converges. Let
    $k\in\mathbb{N}$ and assume additionally, that $\psi(q)=0$ for all
    $q$ with more than $k$ prime divisors. Given any $0<\delta < 1$,
    let
    \begin{align*}
      \kappa(\delta) <
      \min&\left\{\frac{1}{6\max_{n\in\mathbb{N}}\psi(n)},
      \frac{\delta}{\sqrt{S_{\psi}}}\frac{4}{3\pi^4}\frac{1}{2^n} ,
      \left(\frac{\delta}{S_{\psi}}\right)^{\frac{2}{3}}\left(2\pi^2\sqrt{12}\Bigg[4.19253564643679\cdot
        99684 \right.\right. \\ &\quad \left.\left. {} + 27\cdot
        4^{\frac{3}{16}+\frac{0.11237+0.11237^2}{2}}3^{\frac52}\left(\frac{1}{e\cdot
          0.11237}\right)^{1+\frac{0.11237}{2}} 2^{\frac{7k}{2}}
        \Bigg]\right)^{\frac{-2}{3}}, \sqrt{\frac{\delta}{S_{\psi}}}
      \sqrt{\frac{1}{18\pi^2}} \right\} \\ \simeq
      \min&\left\{\frac{1}{6\max_{n\in\mathbb{N}}\psi(n)},\frac{\delta}{\sqrt{S_{\psi}}}
      0.0410639\frac{1}{2^n}, \right. \\ &\quad \left. {}
      \left(\frac{\delta}{S_{\psi}}\right)^{\frac23} 0.0598025
      \left(417928.72 + 2082.9292 \cdot
      2^{\frac{7k}{2}}\right)^{\frac{-2}{3}},
      \sqrt{\frac{\delta}{S_{\psi}}} 0.0750263\right\}.
    \end{align*}
    Then
    \begin{equation*}
      \lambda\left(\mathcal{B}'(\psi, \kappa)\right) \geq 1-\delta.
    \end{equation*}
\end{thm}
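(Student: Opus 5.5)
We will bound the measure of the complement of $\mathcal{B}'(\psi,\kappa)$ by a union bound over $q$, following the structure of Huang's proof of Theorem~\ref{Huangthm1} but keeping every constant explicit.

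For each $q$ with $\psi(q)\neq 0$, set $\varepsilon=\kappa\psi(q)$. Let $A_q$ be the set of $x\in[0,1]$ with $\lVert qx\rVert\le\varepsilon$ and $\lVert qx^2\rVert\le\varepsilon$. Then
\begin{equation*}
  [0,1]\setminus\mathcal{B}'(\psi,\kappa)\subseteq\bigcup_q A_q,
\end{equation*}
so it suffices to prove $\sum_q\lambda(A_q)\le\delta$.

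The condition $\kappa<1/(6\max\psi)$ gives $\varepsilon<1/6$, so each $A_q$ is a union of intervals around rationals $a/q$, $0\le a\le q$. Each such interval has length at most $2\varepsilon/q$, and it contributes only when $\lVert a^2/q\rVert\lesssim 3\varepsilon$; here $x=a/q+\theta$ with $|\theta|\le\varepsilon/q$, so $q x^2=a^2/q+2a\theta+q\theta^2$. Hence
\begin{equation*}
  \lambda(A_q)\ll\frac{\varepsilon}{q}\,N_q(3\varepsilon),\qquad N_q(\eta):=\#\{a\bmod q:\ a^2\bmod q\in[-\eta q,\eta q]\}.
\end{equation*}

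To estimate $N_q(\eta)$, write it as a sum over $|b|\le\eta q$ of the number of square roots of $b$ modulo $q$. A crude divisor-type bound is not enough, so we expand with additive characters (or Gauss sums). The main term is
\begin{equation*}
  \frac{1}{q}\cdot q\cdot 2\eta q\asymp\eta q,
\end{equation*}
which yields $\lambda(A_q)\ll\varepsilon^2\asymp\kappa^2\psi(q)^2$. Summing over $q$, this term is bounded by $\kappa^2S_\psi$ times an explicit constant, and requiring it to be at most about $\delta/3$ produces the $\sqrt{\delta/S_\psi}$ condition.

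The error terms come from the structure of $q$. First, the $b=0$ solutions and the $\gcd(b,q)>1$ contributions are controlled by the number of $a$ with $q\mid a^2$, which is at most $\sqrt q$ times a divisor-type factor. In this theorem $q$ has at most $k$ prime factors, so the number of square roots modulo $q$ of a unit is at most $2^{k}$ (up to a factor from the power of $2$), and this is the source of the $2^{-n}$ factor in the linear-in-$\delta$ term. This gives a contribution $\varepsilon\,2^{k}/q$. We bound it using $\varepsilon/q\le\frac12(\varepsilon^2+q^{-2})$; alternatively, we can first discard those $q$ with $\psi(q)$ tiny, which gives the $\delta/\sqrt{S_\psi}$ term.

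Second, the character-sum error: counting quadratic residues in the short interval $|b|\le\eta q$ reduces to sums of Jacobi/Legendre characters modulo the prime factors over intervals of length $\eta q$. Here we invoke Burgess's bound with explicit constants, taking $r=2$ (whence $3/16$), with the CRT splitting contributing a factor $2^{k}$ per prime in the $7k/2$ exponent. This gives an error of size $q^{1/2}(\eta q)^{1/2}$ times explicit constants. Weighting by $\varepsilon/q$ and optimizing over $\eta$, then summing via Hölder with exponent chosen so that the result is $\kappa^{3/2}S_\psi$, gives the $(\delta/S_\psi)^{2/3}$ condition. The constants $4.1925\ldots$ and $99684$ come from explicit divisor-function bounds (Robin/Nicolas type) and from the finite range of small $q$.

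Combining the three pieces, each at most $\delta/3$, completes the proof. The main obstacle is the explicit Burgess step for composite moduli with $k$ prime factors. In particular, we must track how the constants multiply over the CRT factors to obtain the clean factor $2^{7k/2}$, and verify that the Burgess estimate is applicable (interval length versus $q^{1/4+\epsilon}$) or else fall back on the trivial bound.
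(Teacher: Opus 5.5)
Your reduction to counting $a$ with $\lVert a^2/q\rVert\le 3\kappa\psi(q)$ and your main term $\kappa^2 S_\psi$ agree with the paper. However, both error terms are handled in ways that cannot produce the stated conditions.

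\emph{The degenerate term.} The number of $a \bmod q$ with $q\mid a^2$ is exactly $r$, where $r^2$ is the largest square dividing $q$. The paper's corresponding term is $\sum_{d\mid q,\ q/d=\square}\sqrt{q/d}\le r\,d(q)\le 2^k r$. Since $q$ need not be squarefree under the hypothesis, your value $\varepsilon 2^k/q$ for this piece fails (take $q=p^2$). On the other hand, $\sqrt q$ in place of $r$ is too weak to be summed against $\psi(q)/q$. Worse, the step $\varepsilon/q\le\frac12(\varepsilon^2+q^{-2})$ leaves the term $2^{k-1}\sum_q q^{-2}$, which does not depend on $\kappa$, so this piece can never be pushed below $\delta/3$. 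The paper uses Cauchy--Schwarz instead:
\[
\sum_q \frac{\psi(q)}{q}\,r\le \sqrt{S_\psi}\Bigl(\sum_q \frac{r^2}{q^2}\Bigr)^{1/2}\le \sqrt{S_\psi}\,\frac{\pi^2}{6},
\]
writing $q=r^2t$ with $t$ squarefree. This is exactly where the condition $\kappa\lesssim \delta/(2^k\sqrt{S_\psi})$ comes from.

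\emph{The Burgess term.} An error of size $q^{1/2}(\eta q)^{1/2}=\eta^{1/2}q$ in $N_q(\eta)$ has no saving in $q$; it even exceeds the main term. Weighted by $\varepsilon/q$ it gives $\asymp\kappa^{3/2}\psi(q)^{3/2}$, whose sum is not controlled by $S_\psi$. Also, $\eta=3\varepsilon$ is not a free parameter to optimise. What is needed is an error $\ll \varepsilon^{1/2}q^{11/16+\epsilon'}$. The paper gets this on the dual side:
\begin{itemize}
\item it uses the Fej\'er kernel of order $J=\lfloor 1/(6\kappa\psi(q))\rfloor$ and writes $j=j_1d$, $d=\gcd(j,q)$, $q=q_1d$;
\item it evaluates the Gauss sums as $2\sqrt{q_1}\chi(j_1)$ or $0$ (Lemma~\ref{huanglemma1});
\item it applies explicit Burgess to $\sum_{j_1\le u}\chi(j_1)$ for $u\le J/d$ and integrates against the Abel weight $d^2/J^2$.
\end{itemize}
This yields $\kappa^{3/2}\psi(q)^{3/2}q^{-5/16+\epsilon'}$ per $q$. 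Your $b$-side route can reach the same size for odd squarefree $q$, via $\#\{a: a^2\equiv b\}=\sum_{m\mid q}\bigl(\frac{b}{m}\bigr)$ when $\gcd(b,q)=1$. But the cases $\gcd(b,q)>1$ and non-squarefree $q$ would then need separate treatment, which the Gauss-sum decomposition handles uniformly.

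Even with this saving, no H\"older argument bounds $\sum_q\psi(q)^{3/2}q^{-5/16+\epsilon'}$ by a multiple of $S_\psi$. The two sides are homogeneous of degrees $3/2$ and $2$ in $\psi$, so scaling $\psi\mapsto t\psi$ with $t\to 0$ rules it out. H\"older yields $S_\psi^{3/4}$, hence a condition of a different shape. The paper breaks this homogeneity by first reducing to $\psi(q)\ge q^{-5/8+\eta_0}$ for a fixed $0<\eta_0<1/8$; that step is absent from your plan.

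Finally, your accounting of the constants is off. The factor $2^{7k/2}$ is $d(q)^{7/2}$, made up of:
\begin{itemize}
\item $d(\cdot)^{3/2}$ from Bordignon's bound for primitive characters;
\item one more factor $d(\cdot)$ from passing to imprimitive characters (Remark~\ref{burgessallcharacters});
\item one more from counting the divisors $d$ in $q=q_1d$.
\end{itemize}
The constants $4.1925\ldots$ and $99684$ are the computed Burgess constant for $q_1\le 10^5$ and the number of non-squares up to $10^5$, respectively, not divisor-function bounds.
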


\begin{thm}[$q=p$ prime]\label{thmprime}
  Let $\psi:\mathbb{N}\to [0, \infty)$ be any function such that
    $S_{\psi}:=\sum_{q=1}^{\infty} \psi(q)^2$ converges. Assume
    additionally, that $\psi(q)=0$ whenever $q$ is not a prime. Given
    any $0<\delta < 1$, let
    \begin{align*}
      \kappa(\delta) &< \min\left(\frac{1}{6\max_{p}\psi(p)} ,
      \frac{2\delta}{3\pi^2 S_{\psi}}, \left(\frac{\delta (e\cdot
        0.249)^{\frac14}}{2 \pi^2\sqrt{12}\cdot 10.0366
        S_{\psi}}\right)^{\frac23},\sqrt{\frac{\delta}{18\pi^2
          S_{\psi}}} \right)\\ &\simeq \min\left(
      \frac{1}{6\max_{p}\psi(p)} , \frac{\delta}{S_{\psi}}
      0.0675474558, \frac{\delta^{\frac23}}{S_{\psi}^{\frac23}}
      0.0120432671,\sqrt{\frac{\delta}{S_{\psi}}} 0.0750263597
      \right).
    \end{align*}
	
    Then
    \begin{equation*}
      \lambda\left(\mathcal{B}'(\psi, \kappa)\right) \geq 1-\delta.
    \end{equation*}
\end{thm}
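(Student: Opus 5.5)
The plan is a first-moment (Borel--Cantelli type) estimate, made quantitative. The complement of $\mathcal{B}'(\psi,\kappa)$ is contained in $\bigcup_p A_p$, where
\[
A_p:=\{x\in[0,1]: \lVert px\rVert\le \kappa\psi(p),\ \lVert px^2\rVert\le\kappa\psi(p)\}.
\]
By hypothesis only primes $p$ contribute. It therefore suffices to show
\[
\lambda(A_p)\le C_1\kappa\psi(p)^2+C_2\kappa^{3/2}\psi(p)^2+C_3\kappa^2\psi(p)^2,
\]
and then to sum over $p$. The sum is at most $S_\psi(C_1\kappa+C_2\kappa^{3/2}+C_3\kappa^2)$, and each of the three summands is forced below $\delta/3$ by the second, third and fourth entries of the minimum. (This splitting of $\delta$ is where the factors $3$ and $\sqrt{18}$ should come from.) The first entry, $\kappa<1/(6\max_p\psi(p))$, guarantees $\varepsilon:=\kappa\psi(p)<1/6$. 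Consequently the intervals $|x-a/p|<\varepsilon/p$ for $a=0,\dots,p$ are disjoint, and the quadratic correction terms below stay controlled.

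\emph{Local analysis.} Fix $p$ and $a$, and write $x=a/p+t$ with $|t|<\varepsilon/p$. Then
\[
px^2=\frac{a^2}{p}+2at+pt^2,
\]
and $|pt^2|<\varepsilon^2/p$ is negligible. Hence $\lVert px^2\rVert<\varepsilon$ forces the residue $b\equiv a^2 \pmod p$ (taken with $|b|\le p/2$) to satisfy $|b|/p\lesssim\varepsilon(1+2a/p)$, since $2at$ moves by at most $2a\varepsilon/p$. For each admissible $a$, the set of $t$ with $\lVert a^2/p+2at+pt^2\rVert<\varepsilon$ has measure $\lesssim \min(\varepsilon/p,\ \varepsilon/a)$. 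The derivative of the phase in $t$ is about $2a$, so each of the boundedly many integers near the phase contributes length at most $\varepsilon/a$.

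\emph{Counting.} It then remains to count the $a\le p$ with $a^2 \bmod p$ in a short interval $|b|\le L$, where $L\approx\varepsilon p$. This splits into two regimes.
\begin{itemize}
\item For small $a$ (say $a\le \varepsilon p$ up to a constant), I bound trivially: each such $a$ contributes $\varepsilon/p$. Summing gives a term of order $\varepsilon^2=\kappa^2\psi(p)^2$, which I expect to yield the $\sqrt{\delta/(18\pi^2 S_\psi)}$ condition.
\item For the remaining $a$, I detect $a^2\equiv b$ using $1+\left(\frac{b}{p}\right)$, i.e.\ via the Legendre symbol (equivalently, by exponential sums $\sum_a e(ha^2/p)$ and Fourier analysis of the indicator of $[-L,L]$). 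The contribution of the trivial character gives the main term $\asymp L\cdot\varepsilon/p\cdot p^{-1}\cdot p\asymp\varepsilon^2$, or $\varepsilon\psi$-type, depending on how the weights $\varepsilon/a$ are summed dyadically; the factor $\pi^2$ should come from $\sum 1/h^2$ in the Fourier expansion. The nontrivial character contributes a character sum over the short interval $|b|\le L$.
\end{itemize}

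\emph{Main obstacle.} The hard step is bounding the character sum over short intervals with fully explicit constants and uniformly in $p$. For primes, I would first try an explicit Pólya--Vinogradov/Burgess-type bound with $r=2$. This is where the constants $10.0366$ and $(e\cdot 0.249)^{1/4}$ should arise, through optimising a parameter in a bound of the shape $\sum_{|b|\le L}\left(\frac{b}{p}\right)\ll L^{1/2}p^{3/16+\eta}$.

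Combining this with the weight $\varepsilon/a$ and balancing against the trivial bound (choosing the cutoff in $L$ relative to $p$ optimally) should produce an error of order $\varepsilon^{3/2}\psi$-scale, i.e.\ $\kappa^{3/2}\psi(p)^2$. This gives the $(\delta/S_\psi)^{2/3}$ condition. I expect the delicate bookkeeping to be ensuring that every power of $p$ cancels, so that the bound depends only on $\psi(p)^2$; it is here that an explicit constant, rather than an $O$-term, becomes essential.
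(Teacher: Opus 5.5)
\textbf{The route, compared with the paper.} Your overall architecture matches the paper's: a union bound over primes, reduction to counting $a$ with $a^2 \bmod p$ near $0$, Legendre-symbol detection, an explicit Burgess bound with $r=2$, and splitting $\delta$ into thirds. The difference is that the paper works on the dual side. It majorises the number of $a\le p$ with $\lVert a^2/p\rVert\le 3\kappa\psi(p)$ by a Fej\'er kernel of length $J=\lfloor 1/(6\kappa\psi(p))\rfloor$; the bound $\mathcal{F}_J\ge 4/\pi^2$ on the target set is where $\pi^2$ enters. It then evaluates Gauss sums and applies McGown's bound to $\sum_{j\le u}(j/p)$ with $u\le J$. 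You would instead apply it to $\sum_{|b|\le L}(b/p)$ with $L\approx 3\kappa\psi(p)p$.

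Both routes give a counting error of order $(\kappa\psi(p))^{1/2}p^{11/16}\log(p)^{1/4}$, so the difference is harmless; yours even avoids the $\pi^2/4$ loss of the Fej\'er majorant. However, your $L$ exceeds $p^{5/8}$ once $\kappa\psi(p)\gg p^{-3/8}$. That is outside the range of Lemma~\ref{lemmaMcGown}, so you would also need an explicit P\'olya--Vinogradov bound there.

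\textbf{The gap.} The genuine gap is your reduction to $\lambda(A_p)\le(C_1\kappa+C_2\kappa^{3/2}+C_3\kappa^2)\psi(p)^2$, together with the hope that ``every power of $p$ cancels''. It does not, for two reasons.
\begin{enumerate}
\item After multiplying by the interval length $2\kappa\psi(p)/p$, your Burgess term is $\asymp\kappa^{3/2}\psi(p)^{3/2}p^{-5/16}\log(p)^{1/4}$. This is $O(\kappa^{3/2}\psi(p)^2)$ only if $\psi(p)\gtrsim p^{-5/8}\sqrt{\log p}$.
\item $[0,\kappa\psi(p)/p]\subset A_p$ always. This is the residue $b=0$, which your small-$a$ count ``$\varepsilon p\cdot\varepsilon/p=\varepsilon^2$'' silently drops when $\varepsilon p<1$. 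Hence $\lambda(A_p)\ge\kappa\psi(p)/p$, which is not $O(\kappa\psi(p)^2)$ when $\psi(p)\ll 1/p$.
\end{enumerate}

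The paper supplies the missing input by assuming ``without loss of generality'' that $\psi(q)\ge q^{-5/8+\eta}$. This needs, implicitly, $\eta\ge 0.1245$, so that $\psi(p)^{3/2}p^{-5/16+0.249/4}\le\psi(p)^2$. The same assumption gives $\psi(p)/p\le\psi(p)^2$, which is how the $J\ge p$ contribution becomes the term $\kappa\frac{\pi^2}{2}S_\psi$.

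\textbf{A caveat about the paper's fix.} Replacing $\psi$ by $\tilde\psi=\max(\psi(q),q^{-5/8+\eta})$ enlarges the series. So that argument really proves the statement with $S_{\tilde\psi}$ in place of $S_\psi$, and with $S_\psi$ as written the statement fails. For example, take $\delta=10^{-4}$, $\psi(2)=10^{-5}$ and $\psi=0$ elsewhere. Then $\kappa=70$ satisfies all four constraints, yet $[0,3.5\cdot 10^{-4}]\subset\mathcal{B}'(\psi,\kappa)^c$, so the complement has measure greater than $\delta$.

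So no amount of bookkeeping can close your step. You must build a lower bound on $\psi$ into the hypothesis, and pay for it in the series that appears in $\kappa(\delta)$.
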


Note that Theorem \ref{thmp1...pn} of course applies here as well with
$k=1$, but the bounds are not easily comparable. It is a consequence
of the versions of Burgess' theorem used in the different proofs.  We
could of course put the bounds from Theorem \ref{thmp1...pn} into the
minimum of Theorem \ref{thmprime}, but we have opted for the above
version which is already complicated to state.

All four proofs follow the same structure. For this reason, we will
first prove Theorem \ref{thmallq} in detail, after which Theorems
\ref{thmlargeq}, \ref{thmp1...pn}, and \ref{thmprime} are shown with
references to the main proof.  The structure of the proof is as
follows: Reduce the problem to counting rational points with fixed
denominator near the parabola. Then this counting function is
estimated by exponential sums, which in turn are rewritten into sums
of Dirichlet characters. Up to this point we have followed the same
steps as \cite{Huang2020}, making sure to keep all constants and lower
order terms along the way. As in \cite{Huang2020} we want to estimate
our character sums using Burgess's bound for short sums of Dirichlet
characters, the problem is, that there is no effective version that
holds for all composite moduli $q$. In Section \ref{SectionBurgess} we
take care of this problem.

\section{Notes on Explicit Burgess bounds}\label{SectionBurgess} %--------EXPLICIT BURGESS--------

Before we get into the proofs of Theorems \ref{thmallq} and
\ref{thmlargeq} we need an explicit version of Burgess's famous bound
for short sums of Dirichlet characters \cite{Burgess1963}.
\begin{bur}
  Let $\chi$ be a non-principal character modulo $q$. Then for
  any $\varepsilon>0$ we have
  \begin{equation*}
    \left\rvert \sum_{M < n\leq M+N} \chi(n) \right\rvert
    \ll_{\varepsilon} \sqrt{N} q^{\frac{11}{16}+\varepsilon},
  \end{equation*}
  where the implied constant only depends on $\varepsilon$.
\end{bur}
We will need an explicit version, where the implied constant is
known. In addition, we will need versions both for composite and prime
moduli.

For composite moduli $q$ we split into three cases. For large moduli,
we refer to a result of Bordignon \cite{Bordignon2022}(See also
\cite{Jain-Sharmaetal2021}).

\begin{lem}[Large $q$. Theorem 1.3 in \cite{Bordignon2022}]\label{burgesslargeq}
  Let $q > \exp(\exp(8))$. Let $\chi$ be a primitive Dirichlet
  character mod $q$. For any integers $M$ and $N<q$ we have
  \begin{equation*}
    \left\lvert \sum_{M<n\leq M+N} \chi(n) \right\rvert \leq 5
    d(q)^{\frac32}\sqrt{N}q^{\frac{3}{16}}\sqrt{\log(q)\log(\log(q))}
  \end{equation*}
\end{lem}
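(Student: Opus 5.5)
This lemma is quoted from Bordignon (Theorem 1.3 in \cite{Bordignon2022}), so in the paper no new proof is needed. If I had to reconstruct it, I would run Burgess's original argument for a general modulus and keep every constant explicit, as Bordignon does.

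\emph{Step 1: shifting.} Write $S(M,N)=\sum_{M<n\le M+N}\chi(n)$. For an auxiliary parameter $A$, and for $a\le A$ and $b\le B$ with $b$ coprime to $q$, shift the interval by $ab$ and average. Using multiplicativity, $\chi(n+ab)=\chi(b)\chi(\bar b n+a)$, this gives
\begin{equation*}
  |S(M,N)| \le \frac{1}{AB}\sum_{x \bmod q} \nu(x)\Bigl|\sum_{a\le A}\chi(x+a)\Bigr| + O(AB),
\end{equation*}
where $\nu(x)$ counts the representations $x\equiv \bar b n \pmod q$.

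\emph{Step 2: Hölder.} Apply Hölder's inequality with exponents $(1-1/r,\,1/(2r),\,1/(2r))$. This reduces the problem to three quantities:
\begin{itemize}
\item $\sum_x\nu(x)$, which equals $NB$;
\item $\sum_x\nu(x)^2$, which is essentially $NB$ plus a divisor-type count, and this is where the factor $d(q)$ enters;
\item the complete moment $\sum_x\bigl|\sum_{a\le A}\chi(x+a)\bigr|^{2r}$.
\end{itemize}

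\emph{Step 3: the moment bound (the main obstacle).} For composite $q$, the complete sum $\sum_x\prod_i\chi(x+a_i)\overline{\chi}(x+b_i)$ is bounded via Weil's estimate on each prime-power component, combined by the Chinese remainder theorem. A primitive $\chi$ is needed for this, and the resulting bound is of shape $(2r)^{r}A^{r}q + (2r-1)A^{2r}q^{1/2}d(q)^{c}$. Making this explicit for prime-power moduli is the delicate part. The cleanest route is to take $r=3$ and accept the $q^{3/16}$ exponent (rather than $q^{1/16}$ from larger $r$), since for this application a $\sqrt N$-type bound suffices.

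\emph{Step 4: optimisation.} Choose $A\asymp \sqrt{N}q^{-3/16}$-type parameters and $B\asymp N/A$, then collect terms, producing $d(q)^{3/2}\sqrt N q^{3/16}$. The factor $\sqrt{\log q\log\log q}$ comes from bounding $\nu$ sums with $b$ restricted to be coprime to $q$, through $q/\varphi(q)\ll\log\log q$. The hypothesis $q>\exp(\exp(8))$ is used only to absorb the lower-order terms into the constant $5$.
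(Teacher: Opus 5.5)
Your first sentence is exactly what the paper does. The lemma is Theorem 1.3 of \cite{Bordignon2022}, quoted without proof, so the citation on its own is a complete justification. The paper does extra work only in the intermediate range, Lemma \ref{burgessmediumq}, where it checks the hypotheses of Bordignon's Theorem 3.4.

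The reconstruction you sketch, however, would not give the stated bound, so it should not be offered as a proof. There are three concrete problems.

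\emph{The exponents are attached to the wrong $r$.} Burgess's argument with H\"older weights $(1-\frac1r,\frac1{2r},\frac1{2r})$ gives $N^{1-1/r}q^{(r+1)/(4r^2)}$. So $\sqrt N\,q^{3/16}$ is the case $r=2$ (a fourth moment), whereas $r=3$ gives $N^{2/3}q^{1/9}$. Moreover, for moduli that are not cube-free the method only reaches $r\le 3$, so there is no ``larger $r$'' option for this lemma.

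\emph{The complete moment for composite $q$ does not follow from Weil on each prime-power component.} Weil's bound is for prime moduli. Components $p^k$ with $k\ge 2$ need Burgess's separate treatment of primitive characters to prime-power moduli, and this is what restricts general moduli to $r\le 3$. This moment is also the natural source of divisor-function losses, since $\sum_x\nu(x)^2$ only involves units modulo $q$.

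\emph{Your parameters do not close the argument.} With $B\asymp N/A$ the shift error $O(AB)$ is $O(N)$, which is trivial. With $A\asymp\sqrt N q^{-3/16}$ as you propose, the H\"older main term is of order $N^{3/8}q^{19/64}$, up to logarithms. This exceeds $\sqrt N q^{3/16}$ whenever $N<q^{7/8}$.

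The choice that works is $A\asymp q^{1/4}$, which balances $A^2q$ against $A^4q^{1/2}$ in the fourth moment, together with $AB=\eta N$ for a small constant $\eta$. The shift error is then written as $S(M,ab)-S(M+N,ab)$ and bounded by induction on $N$. That induction, combined with Bordignon's numerical verification for $q>\exp(\exp(8))$, is what produces the explicit constant $5$.
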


Here and throughout, $d(q)$ denotes the number of divisors of $q$.

For small values of $q$, the constant may be calculated by brute
force, which we have done using \texttt{Python}.

\begin{lem}[Small $q$. See Appendix \ref{SectionPython}]\label{burgesssmallq}
  For $1\leq q \leq 10^5$ and $q^{\frac{3}{8}}\leq N\leq q^{\frac58}$ we have
  \begin{equation*}
    \left\lvert\sum_{j=1}^{N} \chi(j) \right\rvert \leq
    4.19253564643679\sqrt{N}q^{\frac{3}{16}},
  \end{equation*}
  where $\chi(j) = \left(\frac{j}{q}\right)$ or $\chi(j) =
  \left(\frac{q}{j}\right)$.
\end{lem}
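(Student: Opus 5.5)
The statement is a finite check, so I will prove it by exhaustive computation, organised so that the work is feasible and the maximum is certified. For every modulus $1\le q\le 10^5$ and each of the two characters $\chi(j)=\left(\frac{j}{q}\right)$ and $\chi(j)=\left(\frac{q}{j}\right)$ (Jacobi/Kronecker symbols), I will compute the ratio
\begin{equation*}
  R(q,N) := \frac{\left\lvert \sum_{j=1}^{N}\chi(j)\right\rvert}{\sqrt{N}\,q^{3/16}}
\end{equation*}
for every integer $N$ with $q^{3/8}\le N\le q^{5/8}$. The claimed constant $4.19253564643679$ is then defined as the maximum of $R(q,N)$ over all these triples, and the lemma holds by construction. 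The proof amounts to describing this computation precisely and noting where the maximum is attained, presumably at some small $q$, where $q^{3/16}$ is close to $1$.

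To keep the cost manageable, for fixed $q$ and fixed $\chi$ I will not recompute sums from scratch. I will walk $j$ from $1$ to $\lfloor q^{5/8}\rfloor$, maintain the running partial sum, and evaluate the ratio only once $j\ge q^{3/8}$. Each symbol value comes from the standard reciprocity-based binary algorithm for the Jacobi symbol, which costs $O(\log q)$. For $\left(\frac{q}{j}\right)$ with $j$ even I use the Kronecker extension, and I take $\chi(j)=0$ whenever $\gcd(j,q)>1$, so that both symbols agree with the paper's usage. The total work is about $\sum_{q\le 10^5}q^{5/8}\log q\approx 10^{8}$ operations, which is easily done in \texttt{Python} and is presumably the code in Appendix \ref{SectionPython}.

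The main obstacle is reliability rather than mathematics, and I see three points that need care. First, the interval endpoints must be handled correctly: $N$ should run from $\lceil q^{3/8}\rceil$ to $\lfloor q^{5/8}\rfloor$. For tiny $q$, where this range may be empty or a single point, floating-point evaluation of $q^{3/8}$ near integers must be done exactly, for example by testing $N^8\ge q^3$ and $N^8\le q^5$ in integer arithmetic. Second, the conventions for even $q$ and even $j$ must match the later application of the lemma. Third, because the constant is quoted to 15 digits, the final maximiser $(q,N,\chi)$ should be recomputed in high-precision arithmetic, so that the stated value is a genuine upper bound and is not undercut by rounding. I would therefore report the maximising triple and, if necessary, round the constant up in its last digit.
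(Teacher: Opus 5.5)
This is essentially the paper's proof: Appendix~\ref{SectionPython} runs the same running-partial-sum search and takes the observed maximum as the constant. Your plan is in fact more thorough, since the paper's code only evaluates $\left(\frac{j}{q}\right)$ (via \texttt{kronecker\_symbol(N, q)}) and never checks $\left(\frac{q}{j}\right)$. One correction to your expectation: the maximum is not at small $q$ (for $q\le 4.1925^{8}\approx 95458$ the bound already follows from $\lvert\sum_{j\le N}\chi(j)\rvert\le N\le\sqrt{N}\,q^{5/16}$) but at $q=313^2$, $N=1316$. There both symbols equal the principal character mod $313$ and the ratio is $1312/(\sqrt{1316}\,313^{3/8})\approx 4.1925356$, so including the second family leaves the constant unchanged.
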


This leaves the intermediate range. For this, we use again results of
Bordignon \cite{Bordignon2022}, but in this case additional work is
needed.

\begin{lem}[Medium $q$]\label{burgessmediumq}
  Let $10^5 \leq q \leq \exp(\exp(8))$ and let $\chi$ be a primitive
  Dirichlet character mod $q$. For any integers $M$ and $N<q$ we have
  \begin{equation*}
    \left\lvert \sum_{M<n\leq M+N} \chi(n) \right\rvert \leq 27
    d(q)^{\frac32}\sqrt{N}q^{\frac{3}{16}}\sqrt{\log(q)\log(\log(q))}
  \end{equation*}
\end{lem}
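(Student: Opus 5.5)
The plan is to rerun the argument behind Theorem 1.3 of \cite{Bordignon2022} (Lemma \ref{burgesslargeq}) with the hypothesis $q > \exp(\exp(8))$ replaced by $q \geq 10^5$. That proof does not need $q$ large in any essential way. The hypothesis is only used to absorb lower order terms into the leading constant $5$. So we go through the same steps, keep every auxiliary term explicit as a function of $q$, and bound these terms uniformly on $[10^5, \exp(\exp(8))]$. This produces a worse constant, which we will show can be taken to be $27$.

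We first recall the structure of Bordignon's argument, which is the Burgess method with $r=2$. For $N < q^{1/4}$ the trivial bound $N \leq \sqrt{N} q^{1/8}$ already suffices, so we may assume $N \geq q^{1/4}$ (or whatever threshold the original uses). Next we shift the interval by products $ab$, with $a \leq A$ and $b \leq B$, and apply Hölder's inequality. This reduces the problem to the explicit fourth-moment estimate
\begin{equation*}
\sum_{x \bmod q}\Bigl|\sum_{t\le T}\chi(x+t)\Bigr|^{4} \leq C_1\, d(q)^{c}\, q\, T^{2} + C_2\, d(q)^{c'}\, q^{1/2}\, T^{4},
\end{equation*}
which holds for primitive $\chi$ and uses Weil-type bounds for composite moduli via the Chinese remainder theorem. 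Here the factor $d(q)^{3/2}$ comes from these CRT/Weil bounds, and the parameters $A,B$ are chosen as powers of $q$ times logarithmic factors. The output has the shape
\begin{equation*}
\Bigl|\sum_{M<n\le M+N}\chi(n)\Bigr| \leq F(q)\, d(q)^{3/2}\sqrt{N}\, q^{3/16}\sqrt{\log q\,\log\log q},
\end{equation*}
where $F(q)$ collects terms such as $1 + O(1/\log q)$, ratios $\log(AB)/\log q$, counting errors from $\lfloor\cdot\rfloor$, and the contribution of the divisor sum of $\gcd$'s. For $q > \exp(\exp(8))$ one has $F(q) \leq 5$.

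The main step is therefore to extract $F(q)$ explicitly and bound it on the medium range. Where the original argument uses inequalities such as $\log\log q \geq 8$, we substitute the weaker inequality $\log\log q \geq \log\log 10^5 \approx 2.44$. Each piece of $F$ is monotone in $\log q$, or a product of monotone pieces, so its supremum on the range is attained at the endpoint $q = 10^5$, or can be bounded piecewise by splitting the interval into a few subranges and evaluating numerically. One point needs care: the optimization of $A,B$ may require $A,B \geq 1$ or $N \leq q^{5/8}$ conditions that fail for small $q$. We handle the residual cases trivially, using $|\sum \chi| \leq N$ for very short $N$ and the Pólya--Vinogradov inequality with an explicit constant (e.g.\ $\sqrt{q}\log q$) for $N$ close to $q$. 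In each case we check that the bound is at most $27\,d(q)^{3/2}\sqrt{N}q^{3/16}\sqrt{\log q\log\log q}$ when $q \geq 10^5$.

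The main obstacle is the bookkeeping in the Hölder/averaging step. There the error terms are of relative size $1/\log\log q$ or $q^{-\epsilon}$, which is tiny beyond $\exp(\exp(8))$ but of order one near $10^5$. To control this we track these terms numerically, re-optimizing the free parameters (Bordignon's choices of $A$ and $B$ constants) for the range $[10^5, \exp(\exp(8))]$ instead of using her large-$q$ choices. We then accept whatever constant results, and the claim is that it is at most $27$.
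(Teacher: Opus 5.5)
Your guiding idea is the one the paper uses: Bordignon's method does not really need $q>\exp(\exp(8))$, and on $[10^5,\exp(\exp(8))]$ it just gives a larger constant. But the proposal never proves that this constant is $27$. Your last sentence (``we then accept whatever constant results, and the claim is that it is at most $27$'') is the entire content of the lemma. Nothing before it produces an explicit $F(q)$ or a checkable inequality, and re-running the H\"older/fourth-moment argument with explicit Weil bounds for composite moduli is a long computation that you only gesture at.

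The missing observation is that no re-derivation is needed. Bordignon's Theorem 3.4 is already stated for general $q$, with free parameters $g\ge 2$, $h$, $m$. It gives the bound with constant $m\,d(q)^{3/2}$ for all $M,N$, so your separate treatment of short and long $N$ is unnecessary. It requires only two conditions: $q$ must exceed an explicit maximum of four quantities, and an explicit function must satisfy $v_3(m,q,g,h)\le m$. The paper's proof chooses $g=2$, $h=17$, $m=27$ and checks both conditions numerically on the range, obtaining $v_3\le 26.72\ldots<27$.

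Second, your claim that each piece of $F$ is monotone in $\log q$, so that its supremum sits at $q=10^5$, is not right as stated. The size condition contains the term $2^{\tau(q)+1}q/\varphi(q)$, which depends on the arithmetic of $q$ (its number of prime factors and its totient), not on its size. To bound it uniformly, the paper uses two results absent from your plan: Robin's bound $\tau(q)\le\frac{\log q}{\log\log q}\bigl(1+\frac{1.4573}{\log\log q}\bigr)$ and the Bach--Shallit bound $\varphi(q)>q/\bigl(e^C\log\log q+3/\log\log q\bigr)$.

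Moreover, the resulting expressions mix factors that grow with $q$ (the bound for $2^{\tau(q)}$, the $\log\bigl(q^{3/8}\bigr)$ inside $v_2$) with factors that decay. So evaluating at one endpoint is not automatically enough, and neither is a product of monotone pieces. For $v_2$, for example, the paper puts the upper limit $\exp(\exp(8))$ inside the logarithm and the lower limit $10^5$ in the prefactor, obtaining $v_2\le 100.776$, and only then checks $v_3<27$.
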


\begin{proof}
  The statement follows from Theorem 3.4 in \cite{Bordignon2022},
  which we state a simplified version of. Below we take $\tau(q)$ to
  be the number of prime divisors of $q$, $C$ to be the
  Euler--Mascheroni constant, and $\varphi(q)$ to be Euler's totient
  function.
	
  Let $q$ be an integer. Let $g\geq2$ be a real number and let $m$ be
  a positive real number. Let $\chi$ be a primitive Dirichlet
  character modulo $q$. Assume that
  \begin{equation}
    q > \max\left\{ \left( \max\left\{29,
    2^{\tau(q)+1}\frac{q}{\varphi(q)}+1\right\}\frac{g}{m^2 \log(q)
      \log(\log(q))}\right)^{8}, \left(\frac{12}{g}\right)^{4}, h^4,
    \left(\frac{16\log(q)}{m^2 \log(\log(q))}\right)^8
    \right\}. \label{BordignonThm3.4}
  \end{equation}
  Define
  \begin{equation*}
    v_1(m,q) = \frac{2\left(1+\frac{2}{e\log(q)}\right)}{m},
  \end{equation*}
  \begin{equation*}
    v_2(m,q,g) = \frac{2 v_1(m,q)^4}{g} + \frac{2\log(\log(q))^2
      \log\left(1.85 v_1(m,q)^2
      q^{\frac38}\frac{\log(q)}{g\log(\log(q))}\right)}{\log(q)^2},
  \end{equation*}
  and
  \begin{equation*}
    v_3(m,q,g,h) = 2g\left( 1-\frac{1}{h} - \frac{g}{m^2
      q^{\frac18}\log(q)\log(\log(q))}\right)^{-1}\left( \frac{17
      v_2(m,q,g)}{4g^3}\right)^{\frac14}\left(e^C+\frac{2.51}{\log(\log(q))^2}\right)
    + \frac{2}{\sqrt{g}}.
  \end{equation*}
  If 
  \begin{equation}
    v_3(m,q,g,h)\leq m \label{vleqm}
  \end{equation}
  holds, then for any integers $M, N$, we have
  \begin{equation*}
    \left\lvert \sum_{M<n\leq M+N} \chi(n) \right\rvert \leq m
    d(q)^{\frac32}\sqrt{N}q^{\frac{3}{16}}\sqrt{\log(q)\log(\log(q))}
  \end{equation*}
	
  We claim, that for $g=2$, $h=17$, $m=27$ and $10^5\leq q\leq
  \exp(\exp(8))$, equations (\ref{BordignonThm3.4}) and (\ref{vleqm})
  hold.  Indeed
  \begin{align*}
    \left( 29\frac{2}{27^2 \log(q)\log(\log(q))}\right)^8 \frac{1}{q}
    \leq \left( 29\frac{2}{27^2 \log(10^5)\log(\log(10^5))}\right)^8
    \frac{1}{10^5} <1.
  \end{align*}

  We now state two lemmas from literature giving bounds on arithmetic
  functions. The first is a lower bound for the Euler function.
  \begin{lem*}[Theorem 8.8.7 in \cite{BachShallit1996}]
    \begin{equation*}
      \varphi(q)> \frac{q}{e^C\log(\log(q)) + \frac{3}{\log(\log(q))}}.
    \end{equation*}
  \end{lem*}
  We also need an upper bound for the $\tau$-function.
  \begin{lem*}[Theorem 12 in \cite{Robin1982}]
    \begin{equation*}
      \tau(q)\leq \frac{\log(q)}{\log(\log(q))}
      \left(1+\frac{1.4573}{\log(\log(q))}\right).
    \end{equation*}
  \end{lem*}
  
  Using these, we obtain
  \begin{align*}
    &\left[\left( 2^{\tau(q)+1}\frac{q}{\psi(q)} +1\right)
      \frac{2}{m^2\log(q )\log(\log(q))}\right]^{8}\frac{1}{q}
    \\ &\leq \left[\left(
      2^{\frac{\log(q)}{\log(\log(q))}\left(1+\frac{1.4573}{\log(\log(q))}\right)+1}\left(1.7811\log(\log(q))
      + \frac{3}{\log(\log(q))}\right)+1\right) \frac{2}{m^2\log(q
        )\log(\log(q))}\right]^{8}\frac{1}{q} \\ &\leq \left[\left(
      2^{\frac{\log(10^5)}{\log(\log(10^5))}\left(1+\frac{1.4573}{\log(\log(10^5))}\right)+1}\left(1.7811\log(\log(10^5))
      + \frac{3}{\log(\log(10^5))}\right)+1\right)
      \frac{2}{m^2\log(10^5)\log(\log(10^5))}\right]^{8}\frac{1}{10^5}
    \\&<1.
  \end{align*}
  as well as
  \begin{align*}
    \left(\frac{12}{2}\right)^4 < q,
  \end{align*}
  \begin{align*}
    17^4 < q
  \end{align*}
  and
  \begin{align*}
    \left(\frac{16\log(q)}{m^2\log(\log(q))}\right)^8\frac{1}{q}<1.
  \end{align*}
  So (\ref{BordignonThm3.4}) holds. 
	
  Checking (\ref{vleqm}), we have for $v_{1}(m,q)$:
  \begin{align*}
    v_1(m,q) &\leq \frac{2\left(1+\frac{2}{e\log(q)}\right)}{27} \leq
    \frac{2\left(1+\frac{2}{e\log(10^5)}\right)}{27}\leq 0.0789,
  \end{align*}
  and for $v_2(m,q,g)$:
  \begin{align*}
    v_2(m,q,g) &\leq  0.0789^4 + \frac{2 \log(\log(q))^2}{\log(q)^2} \log(1.85\cdot 0.0789^2 \frac{q^{\frac38}\log(q)}{2\log(\log(q))} ) \\
    &\leq  0.0789^4 + \frac{2 \log(\log(10^5))^2}{\log(10^5)^2} \log(1.85\cdot 0.0789^2 \frac{(\exp(\exp(8)))^{\frac38}\exp(8)}{16} ) \leq 100.776.
  \end{align*}
  Notice here, that the above is absolutely not optimal, since we are
  estimating quite brutally using both the upper and lower limit for
  $q$. This means that the $m$ value $27$ likely is not optimal. The
  function above is eventually decreasing in $q$, so for intervals of
  large $q$, an upper bound can be obtained from plugging in just the
  lower bound for $q$.
	
  And finally for $v_3(m,q,g,h)$:
  \begin{align*}
    v_3(m,q,g,h) &\leq 4\left(\frac{16}{17}-\frac{2
    }{q^{\frac18}\log(q)\log(\log(q))}\right)^{-1}\left(\frac{17 \cdot
      100.776}{4\cdot
      2^3}\right)^{\frac14}\left(e^C+\frac{2.51}{\log(\log(q))^2}\right)
    + \frac{1}{\sqrt{2}} \\ &\leq 4\left(\frac{16}{17}-\frac{2
    }{10^{\frac58}\log(10^5)\log(\log(10^5))}\right)^{-1}\left(\frac{17
      \cdot 100.776}{4\cdot
      2^3}\right)^{\frac14}\left(e^C+\frac{2.51}{\log(\log(10^5))^2}\right)
    + \frac{1}{\sqrt{2}}\\ &\leq 26.7236... < 27.
	\end{align*}
\end{proof}

We will also need to consider Dirichlet characters which are not
primitive. To extend to this case, we employ the following, as
remarked by Burgess in \cite{Burgess1963}:
\begin{rem}\label{burgessallcharacters}
  If $\chi$ mod $q$ is not primitive, it is induced by a primitive
  character, so we may write $\chi = \chi_1\chi_2$, where $\chi_1$ is
  primitive mod $k_1$ and $\chi_2$ is principal mod $k_2$
  ($k_1+k_2=q$), and then
  \begin{align*}
    \sum_{M<n\leq M+N} \chi(n) &= \sum_{\substack{M<n\leq M+N\\ \gcd(n,k_2)=1}}\chi_1(n) \\
    &=\sum_{M<n\leq M+N} \chi_1(n)\sum_{t\mid \gcd(n,k_2)}\mu(t)
  \end{align*}
  where $\mu$ is the M\"obius function.
  Applying the relevant lemma above (depending on $q$), we get
  \begin{align}
    \left\lvert \sum_{M<n\leq M+N} \chi(n)\right\rvert &\leq
    \sum_{t\mid k_2} \left\lvert \sum_{M<ut\leq M+N}
    \chi(ut)\right\rvert =\sum_{t\mid k_2} \left\lvert
    \sum_{u=\frac{M+1}{t}}^{\frac{M+N}{t}} \chi(u) \right\rvert
    \nonumber \\ &\leq \begin{cases} 27 \cdot
      d(q)^{\frac52}\sqrt{N}q^{\frac{3}{16}}
      \sqrt{\log(q)\log(\log(q))}& \text{for }10^5 < q \leq
      \exp(\exp(8)), \\ 5 \cdot d(q)^{\frac52}\sqrt{N}q^{\frac{3}{16}}
      \sqrt{\log(q)\log(\log(q))}& \text{for }\exp(\exp(8))<q.
    \end{cases}  \label{burgessallqallchar}
  \end{align}
\end{rem}

For $q=p$ a prime, we have better explicit Burgess bounds (see also
\cite{Booker2006}, \cite{IwaniecKowalski2004} and \cite{Trevino2015})
\begin{lem}[$q=p$. Theorem 7.1 in
    \cite{McGown2012}]\label{lemmaMcGown} Suppose $\chi$ is a
  non-principal Dirichlet character modulo a prime $p\geq 2\cdot
  10^4$. For $M\in\mathbb{N}$ and $N\leq p^{\frac{5}{8}}$ we have
  \begin{equation*}
    \left\lvert \sum_{M<n\leq M+N} \chi(n) \right\rvert \leq 10.0366 \log(p)^{\frac14} p^{\frac{3}{16}} \sqrt{N}= 10.0366 \sqrt[4]{C_{\varepsilon}}\sqrt{N}p^{\frac{3}{16}+\frac{\varepsilon}{4}},
  \end{equation*}
  with $C_\varepsilon = \log(p)/p^{\varepsilon}$.
\end{lem}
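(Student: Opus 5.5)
The lemma is essentially a restatement of Theorem 7.1 in \cite{McGown2012}, so the plan is to quote that result and check that its hypotheses and normalisation agree with ours, not to reprove a Burgess-type bound. The only new content is the rewriting of the right-hand side in terms of $C_\varepsilon$.

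First I will check that the setting of McGown's theorem matches ours. We need $\chi$ to be a non-principal character modulo a prime $p$, which is automatically primitive since $p$ is prime. We need the range $p\geq 2\cdot 10^4$ and short intervals $M<n\leq M+N$ with $N\leq p^{5/8}$. Among the parameter choices McGown tabulates, I will pick the one giving the constant $10.0366$ with the factor $\log(p)^{1/4}p^{3/16}\sqrt N$ over this range of $p$.

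Two normalisation points need care. The first is the starting point $M$. If McGown's statement is for intervals of the form $M<n\le M+N$ with arbitrary integer $M$, nothing is needed. If it is phrased with $M\ge 0$ only, I will use the periodicity $\chi(n+p)=\chi(n)$ to shift $M$ into $[0,p)$; this leaves the sum unchanged. The second point is that McGown's results are for $N$ in the Burgess range, up to roughly $p^{1/2+1/8}$. If his constant is only stated for $N\le p^{5/8}$ under an extra side condition, I will restrict accordingly. In the application later in the paper, only $N\le p^{5/8}$ is ever used.

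The second assertion of the lemma is a trivial identity. Setting $C_\varepsilon=\log(p)/p^{\varepsilon}$ gives $\log(p)^{1/4}=C_\varepsilon^{1/4}p^{\varepsilon/4}$, and substituting this yields
\begin{equation*}
10.0366\log(p)^{\frac14}p^{\frac{3}{16}}\sqrt N=10.0366\sqrt[4]{C_\varepsilon}\sqrt N\,p^{\frac{3}{16}+\frac{\varepsilon}{4}} .
\end{equation*}
This form is recorded because later we will want $C_\varepsilon$ bounded uniformly in $p$. Since $\log(p)/p^\varepsilon\le 1/(e\varepsilon)$ by maximising $\log x/x^\varepsilon$, this gives a bound independent of $p$, which is where the factor $(e\cdot 0.249)^{1/4}$ in Theorem \ref{thmprime} comes from.

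The only real obstacle is bookkeeping: confirming that the constant $10.0366$ in \cite{McGown2012} applies to the full stated range $p\ge 2\cdot10^4$, and not only to larger $p$, and that the interval normalisation matches. If it does not, the fallback is to cover the small primes by the same kind of brute-force computation as in Lemma \ref{burgesssmallq}.
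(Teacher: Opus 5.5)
Your proposal matches the paper, which gives no argument beyond quoting McGown's Theorem 7.1 and rewriting $\log(p)^{1/4}=C_\varepsilon^{1/4}p^{\varepsilon/4}$ (later combined with $C_\varepsilon\le 1/(e\varepsilon)$ from Lemma \ref{logbound} at $\varepsilon=0.249$). One side remark is inaccurate, though it does not affect the lemma: in the proof of Theorem \ref{thmprime} the bound is applied to $\sum_{j\le u}\left(\frac{j}{p}\right)$ for every $u\le J=\left\lfloor \frac{1}{6\kappa\psi(p)}\right\rfloor$, and $J$ can be as large as roughly $p^{5/8-\eta}/(6\kappa)$, which exceeds $p^{5/8}$ when $\kappa$ is small compared with $p^{-\eta}$; so it is not true that only $N\le p^{5/8}$ is used there.
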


Running the \texttt{Python} code in Appendix \ref{SectionPython} up to $q=20000$ yields the constant $3.41105405758246$.

\section{Proofs}\label{SectionProofs} %--------PROOFS--------
\subsection{Proof of Theorem \ref{thmallq}}\label{SectionAllq} %--------PROOF
                                                               %FOR
                                                               %ALL
                                                               %q--------

Without loss of generality, assume for any fixed $0<\eta<\frac18$, that
\[\psi(q)\geq q^{\sfrac{-5}{8}+\eta} \quad\text{ for all }q .\]
Otherwise, we replace $\psi$ with $\tilde{\psi}(q) = \max\{\psi(q),
q^{\sfrac{-5}{8}+\eta}\}$ and note that the asociated series still
converges. Note also, that $\mathcal{B}'(\tilde{\psi}, \kappa)
\subseteq \mathcal{B}'(\psi, \kappa)$, so a lower bound on the measure
of the former is automatically a lower bound on the measure of the set
of interest.  Write
\begin{equation*}
  \mathcal{B}'(\psi, \kappa)^c = \left \{x\in[0,1] : \exists
  q\in\mathbb{N} \text{ s.t. } \lVert qx\rVert, \lVert qx^2 \rVert
  \leq \kappa\psi(q) \right \}.
\end{equation*}
Note that if $\lVert qx \rVert, \lVert q x^2 \rVert \leq
\kappa\psi(q)$, there are integers $1\leq a,b\leq q$ such that
\begin{align*}
  &\lvert qx-a\rvert \leq \kappa\psi(q) \quad \text{and} \\ &\lvert
  qx^2-b\rvert \leq \kappa\psi(q).
\end{align*}
Then
\begin{align*}
  \left\lvert \frac{a^2}{q^2}-\frac{b}{q} \right\rvert &\leq
  \left\lvert \frac{a^2}{q^2} - x^2\right\rvert + \left\lvert x^2 -
  \frac{b}{q} \right\rvert \\ &= \left\lvert x-\frac{a}{q}\right\rvert
  \left\lvert x+\frac{a}{q} \right\rvert + \left\lvert
  x^2-\frac{b}{q}\right\rvert \leq 3\kappa\psi(q).
\end{align*}
From this we get, that
\begin{align}
  \lambda\left (\mathcal{B}'(\psi, \kappa)^c\right ) &\leq
  \lambda\left( \bigcup_{q=1}^{\infty} \bigcup_{\substack{a\leq q
      \\ \lVert \sfrac{a^2}{q}\rVert \leq 3\kappa\psi(q)}}
  \left[\frac{a}{q}-\frac{\kappa \psi(q)}{q}, \frac{a}{q} +
    \frac{\kappa \psi(q)}{q}\right] \right) \nonumber\\ &\leq 2\kappa
  \sum_{q=1}^{\infty} \frac{\psi(q)}{q} \sum_{\substack{a\leq q
      \\ \lVert \sfrac{a^2}{q}\rVert \leq 3\kappa\psi(q)}}
  1 \label{lebesguemeasure}
\end{align}
Our goal is now to estimate the counting function
$\sum_{\substack{a\leq q \\ \lVert \sfrac{a^2}{q}\rVert \leq
    3\kappa\psi(q)}} 1$. This function roughly counts the number of
rational points with fixed denominator $q$ lying close to the
parabola.

Put $J = \left\lfloor \frac{1}{6\kappa\psi(q)}\right\rfloor$, whenever
$\psi(q)\neq 0$. Consider the Fej\' er kernel (see
e.g. \cite{Montgomery1991})
\begin{equation*}
  \mathcal{F}_J(x) := \sum_{j=-J}^{J} \frac{J-\lvert j\rvert}{J^2}
  \exp(2\pi i j x) = \frac{1}{J^2} \left( \frac{\sin(2\pi J
    x)}{\sin(2\pi x)}\right)^2 \geq 0 \quad (x\in\mathbb{R}).
\end{equation*}
Using the local bound
\begin{align*}
  \frac{2}{\pi} \lvert x\rvert \leq \lvert \sin(x)\rvert \leq \lvert
  x\rvert \quad \text{ for } \lvert x\rvert \leq \frac{\pi}{2},
\end{align*}
we get for $\lVert \frac{a^2}{q} \rVert \leq 3\kappa\psi(q)$, that
$\left\lvert \pi \lVert\frac{a^2}{q}\rVert \right\rvert
\leq\left\lvert J\pi \lVert \frac{a^2}{q} \rVert \right\rvert \leq
\left\lvert 3 J\pi \kappa\psi(q) \right\rvert \leq \frac{\pi}{2}$ and
thus
\begin{equation*}
  \mathcal{F}_J\left(\frac{a^2}{q}\right) \geq \frac{1}{J^2}
  \left(\frac{\frac{2}{\pi}2\pi J \left
    \lVert\dfrac{a^2}{q}\right\rVert}{2\pi \left
    \lVert\dfrac{a^2}{q}\right\rVert}\right)^2 = \frac{4}{\pi^2}.
\end{equation*}
This in turn gives us, that
\begin{align}
  \sum_{\substack{a\leq q \\ \lVert \sfrac{a^2}{q} \rVert \leq
      3\kappa\psi(q)}} 1 &\leq \frac{\pi^2}{4} \sum_{\substack{a\leq
      q\\\lVert \sfrac{a^2}{q}\rVert \leq
      3\kappa\psi(q)}}\sum_{j=-J}^{J} \frac{J-\lvert j\rvert}{J^2}
  \exp\left(2\pi i j \frac{a^2}{q}\right) \notag\\ & \leq
  \frac{\pi^2}{4} \left[ \sum_{j=1}^{J} \frac{J-j}{J^2}\sum_{a=1}^{q}
    \left( \exp\left(2\pi i j\frac{a^2}{q}\right) + \exp\left(-2\pi i
    j\frac{a^2}{q}\right)\right) + \frac{q}{J} \right] \notag\\ &\leq
  \frac{\pi^2}{4} \left[ \sum_{d\mid q} d \sum_{\substack{j_1=1
        \\ \gcd(j_1, q_1)=1}}^{\sfrac{J}{d}} \frac{J-j_1
      d}{J^2}\sum_{a=1}^{q_1} \left( \exp\left(2\pi i
    j_1\frac{a^2}{q_1}\right) + \exp\left(-2\pi i
    j_1\frac{a^2}{q_1}\right)\right) + \frac{q}{J}
    \right]. \label{countingfunctionfirst}
\end{align}

To estimate this sum, we use Abel summation. Recall the following lemma.
\begin{lem}[Abel's identity, Thm. 4.2 in \cite{Apostol1979}]\label{Abel}
  Let $(a_n)_{n=0}^{\infty}$ be a sequence and for $N\in (0,\infty)$
  $\psi$ be a continuously differentiable function on $[0,N]$. Then
  \[\sum_{n=0}^{N}a_n \phi(n) = \sum_{n=0}^{N} a_n \phi(N) - \int_{0}^{N} \sum_{n=0}^{u} a_n \phi ' (u)  \mathrm{d} u.\]
\end{lem}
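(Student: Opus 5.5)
We read the statement with the evident typo corrected: the continuously differentiable function is $\phi$, not $\psi$. Sums over $0\le n\le u$ are taken over integers $n$, and $N$ may be non-integral. Put
\[
A(u):=\sum_{0\le n\le u} a_n \quad (u\ge 0), \qquad A(-1):=0 .
\]
The identity then reads $\sum_{0\le n\le N} a_n\phi(n)=A(N)\phi(N)-\int_0^N A(u)\phi'(u)\,\mathrm{d}u$. The plan is to prove it by discrete summation by parts, followed by evaluating the integral using the fact that $A$ is a step function.

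\emph{Step 1: the discrete identity.} Let $K=\lfloor N\rfloor$. Writing $a_n=A(n)-A(n-1)$ and reindexing, we get
\[
\sum_{n=0}^{K} a_n\phi(n)=\sum_{n=0}^{K}A(n)\phi(n)-\sum_{n=0}^{K-1}A(n)\phi(n+1) .
\]
This telescopes to
\[
\sum_{n=0}^{K} a_n\phi(n)=A(K)\phi(K)-\sum_{k=0}^{K-1}A(k)\bigl(\phi(k+1)-\phi(k)\bigr).
\]

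\emph{Step 2: evaluating the integral.} The function $A$ is constant, equal to $A(k)$, on each interval $[k,k+1)$, and equal to $A(K)$ on $[K,N]$. Splitting $[0,N]$ at the integers and using the fundamental theorem of calculus for $\phi$, which is $C^1$ on $[0,N]$, gives
\[
\int_0^N A(u)\phi'(u)\,\mathrm{d}u=\sum_{k=0}^{K-1}A(k)\bigl(\phi(k+1)-\phi(k)\bigr)+A(K)\bigl(\phi(N)-\phi(K)\bigr).
\]

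\emph{Step 3: combining.} Substitute the expression from Step 2 into Step 1. Note that $A(K)=A(N)$, since no new terms enter between $K$ and $N$. The boundary terms combine as
\[
A(K)\phi(K)+A(K)\bigl(\phi(N)-\phi(K)\bigr)=A(N)\phi(N),
\]
and the sum $\sum_{0\le n\le N}a_n\phi(n)$ equals $\sum_{n=0}^{K}a_n\phi(n)$. This yields the claimed identity.

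There is no genuine obstacle here. The only point needing care is the bookkeeping at the endpoint when $N\notin\mathbb{Z}$, where the last partial interval $[K,N]$ is handled by the extra term in Step 2. One should also note that the discontinuities of $A$ at the integers are harmless: they form a finite set, so they do not affect the integral.
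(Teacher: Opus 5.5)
Your proof is correct. It is essentially the standard argument of Apostol's Theorem 4.2, which the paper cites rather than reproves: write $a_n = A(n)-A(n-1)$, sum by parts, and turn the differences $\phi(k+1)-\phi(k)$ into integrals of $\phi'$ over the intervals where $A$ is constant, treating the partial interval $[\lfloor N\rfloor, N]$ separately. Your reading of $\psi$ as a typo for $\phi$, and your endpoint bookkeeping for non-integral $N$, are both right.
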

Taking in our case $N=\frac{J}{d}$, \(a_{j_1} = \mathbbm{1}_{\gcd(j_1,
  q_1)=1}\sum_{a=1}^{q_1}\left(\exp\left(2\pi i j_1
\frac{a^2}{q_1}\right) - \exp\left(-2\pi i j_1
\frac{a^2}{q_1}\right)\right)\) and $\phi(j_1) = \frac{J-j_1 d}{J^2}$
and using $\phi(\frac{J}{d})=\frac{J-\frac{J}{d} d}{J^2} = 0$ and
$\phi ' (u) = \frac{-d}{J^2}$ yields

\begin{equation}\label{countingfunctionexpression}
  (\ref{countingfunctionfirst}) = \frac{\pi^2}{4} \left[ \sum_{d\mid
      q} \frac{d^2}{J^2} \int_{1}^{\sfrac{J}{d}} \sum_{\substack{j_1=1
        \\ \gcd(j_1, q_1)=1}}^{u} \sum_{a=1}^{q_1} \left(
    \exp\left(2\pi i j_1\frac{a^2}{q_1}\right) + \exp\left(-2\pi i
    j_1\frac{a^2}{q_1}\right)\right) \mathrm{d} u+ \frac{q}{J} \right]
  .
\end{equation}
As in \cite{Huang2020}, to evaluate the quadratic Gauss sums, we state
the following lemma, originally from \cite{IwaniecKowalski2004} \S
3.5.
\begin{lem}\label{huanglemma1}
  Suppose $\gcd(j_1, q_1)=1$, then
  \begin{equation*}
    \sum_{a=1}^{q_1} \exp\left(2\pi i j_1\frac{a^2}{q_1}\right)
    = \begin{cases} 0 & \text{when } q_1\equiv 2 \mod
      4,\\ \left(\frac{j_1}{q_1}\right) \sqrt{q_1} & \text{when }
      q_1\equiv 1 \mod 4,\\ i\left(\frac{j_1}{q_1}\right) \sqrt{q_1} &
      \text{when } q_1\equiv 3 \mod
      4,\\ (1+i)\left(\frac{q_1}{j_1}\right)\sqrt{q_1} & \text{when }
      q_1\equiv 0 \mod 4 \text{ and } j_1\equiv 1 \mod
      4,\\ (1+i)\frac{1}{i}\left(\frac{q_1}{j_1}\right)\sqrt{q_1} &
      \text{when } q_1\equiv 0 \mod 4 \text{ and } j_1\equiv 3 \mod
      4.\\
    \end{cases}
  \end{equation*}
  Here $\left(\frac{\cdot}{\cdot}\right)$ is the Jacobi symbol.
\end{lem}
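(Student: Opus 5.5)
The plan is to reduce everything to the two classical evaluations: the Gauss sum for an odd prime modulus, and the value $\sum_{a=1}^{2^k} e(ja^2/2^k)$ for powers of $2$. We glue these together with the Chinese remainder theorem (multiplicativity), writing $e(x)=\exp(2\pi i x)$ throughout. Let $G(j,q)=\sum_{a=1}^{q} e(ja^2/q)$ with $\gcd(j,q)=1$.

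\textbf{Multiplicativity.} If $q=rs$ with $\gcd(r,s)=1$, write $a=a_1 s+a_2 r$, where $a_1$ runs mod $r$ and $a_2$ runs mod $s$. The cross term $2a_1a_2 rs/(rs)$ is an integer, so
\[
G(j,rs)=G(js,r)\,G(jr,s).
\]

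\textbf{Odd $q$.} For $q=p$ prime we have $G(j,p)=\left(\frac{j}{p}\right)G(1,p)$, obtained by counting solutions of $a^2\equiv n$. Gauss's evaluation then gives $G(1,p)=\varepsilon_p\sqrt p$, with $\varepsilon_p=1$ if $p\equiv1$ and $\varepsilon_p=i$ if $p\equiv 3 \pmod 4$. For odd prime powers we use the reduction $G(j,p^{k})=pG(j,p^{k-2})$: split $a=b+p^{k-1}c$ and sum over $c$, which forces $p\mid b$. Induction then yields $G(j,p^k)=\left(\frac{j}{p^k}\right)\varepsilon_{p^k}\sqrt{p^k}$. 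Combining via multiplicativity gives $G(j,q)=\left(\frac{j}{q}\right)G(1,q)$ for all odd $q$. The constant $G(1,q)=\varepsilon_q\sqrt q$ follows from the identity $\varepsilon_r\varepsilon_s\left(\frac{r}{s}\right)\left(\frac{s}{r}\right)=\varepsilon_{rs}$, which is exactly quadratic reciprocity. This settles the cases $q_1\equiv1,3\pmod 4$.

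\textbf{The case $q_1\equiv2\pmod4$.} Write $q_1=2m$ with $m$ odd. Then $G(j,q_1)=G(jm,2)\,G(2j,m)$, and $G(jm,2)=1+e(jm/2)=1-1=0$.

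\textbf{The case $4\mid q_1$.} Write $q_1=2^k m$ with $k\ge2$ and $m$ odd. The same splitting argument gives $G(j,2^k)=2G(j,2^{k-2})$ for $k\ge 4$, so it remains to compute directly
\[
G(j,4)=2(1+i^j),\qquad G(j,8)=4e(j/8).
\]
From these, $G(j,2^k)=(1+i^{j})\left(\frac{2^k}{j}\right)\sqrt{2^k}$. The combined answer is then assembled via multiplicativity and the Jacobi reciprocity law
\[
\left(\frac{j}{m}\right)\left(\frac{m}{j}\right)=(-1)^{\frac{j-1}{2}\frac{m-1}{2}},
\]
which turns $\left(\frac{2^k j}{m}\right)$ times the $2$-part into $\left(\frac{q_1}{j}\right)$. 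Note that $1+i^j$ equals $1+i$ for $j\equiv1$ and $(1+i)/i$ for $j\equiv3\pmod 4$, matching the lemma.

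\textbf{Main obstacle.} The main obstacle is the sign bookkeeping in this last step. One must check that the factors $\varepsilon_m$, $\left(\frac{2}{j}\right)^k$, $\left(\frac{2^k}{m}\right)$ and the reciprocity sign combine to exactly $\left(\frac{q_1}{j}\right)$ times $1$ or $1/i$. I would do this by splitting into the four residue classes of $(j,m)$ mod $4$. Since the paper cites the lemma from \cite{IwaniecKowalski2004}, one could alternatively just invoke it there.
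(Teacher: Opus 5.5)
Your proof is correct. The comparison with the paper is simple: the paper does not prove this lemma at all. It quotes it, following Huang, from Iwaniec--Kowalski, \S3.5.

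You instead derive it from only two inputs, Gauss's evaluation $G(1,p)=\varepsilon_p\sqrt p$ for odd primes and Jacobi reciprocity. These are glued together by the CRT identity $G(j,rs)=G(js,r)G(jr,s)$ and the prime-power recursions. The odd case and the case $q_1\equiv 2 \pmod 4$ go through exactly as you wrote them. The citation buys brevity for a classical fact that is not where the paper's work lies. Your route makes visible where each case comes from, in particular that the factor $1/i$ for $j_1\equiv 3 \pmod 4$ comes from the $2$-adic unit interacting with $\varepsilon_m$ and the reciprocity sign.

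Two details need repair when you write it out.

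\textbf{The recursion for powers of $2$.} For $p=2$ the split $a=b+2^{k-1}c$ does not work as "the same argument". The cross term $2^k bc$ vanishes modulo $2^k$, so summing over $c$ imposes no condition on $b$. Use $a=b+2^{k-2}c$ with $c$ modulo $4$ instead. For $k\ge 4$ the term $2^{2k-4}c^2$ vanishes modulo $2^k$. The sum $\sum_{c=0}^{3}(-1)^{jbc}$ then forces $b$ to be even. Since $b'\mapsto e(jb'^2/2^{k-2})$ has period $2^{k-3}$, you get $G(j,2^k)=2G(j,2^{k-2})$.

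\textbf{The CRT assembly.} The $2$-part is $G(jm,2^k)$, whose unit is $1+i^{jm}$ rather than $1+i^{j}$. The factors $\left(\frac{2^k}{m}\right)$ cancel in pairs, and $\left(\frac{2^k}{j}\right)$ appears on both sides. What remains to check is
\[
(1+i^{jm})\,\varepsilon_m\,(-1)^{\frac{j-1}{2}\cdot\frac{m-1}{2}}=1+i^{j}.
\]
This is immediate for $m\equiv 1 \pmod 4$. For $m\equiv 3 \pmod 4$ it reads $(1-i)\,i=1+i$ if $j\equiv 1$, and $(1+i)\,i\,(-1)=1-i$ if $j\equiv 3 \pmod 4$. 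So your planned case split does close the argument.
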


This immediately gives us,
\begin{align}
  \sum_{a=1}^{q_1}& \exp\left(2\pi i j_1\frac{a^2}{q_1}\right) +
  \overline{ \sum_{a=1}^{q_1} \exp\left(2\pi i
    j_1\frac{a^2}{q_1}\right) } \notag \\ &= \begin{cases} 0 &
    \text{when } q_1\equiv 2 \mod 4,\\ \left(\frac{j_1}{q_1}\right)
    \sqrt{q_1} + \overline{\left(\frac{j_1}{q_1}\right) \sqrt{q_1}} &
    \text{when } q_1\equiv 1 \mod 4,\\ i\left(\frac{j_1}{q_1}\right)
    \sqrt{q_1}+\overline{i\left(\frac{j_1}{q_1}\right) \sqrt{q_1}} &
    \text{when } q_1\equiv 3 \mod
    4,\\ (1+i)\left(\frac{q_1}{j_1}\right)\sqrt{q_1}
    +\overline{(1+i)\left(\frac{q_1}{j_1}\right)\sqrt{q_1} }&
    \text{when } q_1\equiv 0 \mod 4 \text{ and } j_1\equiv 1 \mod
    4,\\ (1+i)\frac{1}{i}\left(\frac{q_1}{j_1}\right)\sqrt{q_1}
    +\overline{(1+i)\frac{1}{i}\left(\frac{q_1}{j_1}\right)\sqrt{q_1}}
    & \text{when } q_1\equiv 0 \mod 4 \text{ and } j_1\equiv 3 \mod
    4,\\
  \end{cases} \notag
  \\ &= \begin{cases} 0 & \text{when } q_1\equiv 2,3 \mod
    4,\\ 2\left(\frac{j_1}{q_1}\right) \sqrt{q_1} & \text{when }
    q_1\equiv 1 \mod 4,\\ 2\left(\frac{q_1}{j_1}\right)\sqrt{q_1} &
    \text{when } q_1\equiv 0 \mod 4. \label{mylemma1} \\
  \end{cases}
\end{align}
The Legendre symbols, $\Big(\frac{\cdot}{q_1}\Big) \text{ and
}\Big(\frac{q_1}{\cdot}\Big)$, are Dirichlet characters of modulus at
most $q_1$ resp. $4q_1$.

\begin{lem}[Divisor function bound (from Terence Tao's blog \cite{Tao2008})]\label{divisorboundtao}
  For any $\beta > 0$, there is an explicit constant
  $K_{\beta}>0$ such that
  \begin{equation*}
    d(q) \leq K_{\beta} q^{\beta},
  \end{equation*}
  Specifically we have $K_{\frac{5}{11}} \leq
  \left(\frac{11}{5\log(2)}\right)^{4}$ and $K_{\frac{1}{57}} \leq
  \left(\frac{57}{\log(2)}\right)^{176846309399143769411680}$.
\end{lem}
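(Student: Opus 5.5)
The plan is to follow the standard multiplicative argument (as on Tao's blog): bound the ratio $d(q)/q^{\beta}$ one prime power at a time. Write $q=\prod_{p^{a}\parallel q}p^{a}$. Since $d$ is multiplicative,
\begin{equation*}
  \frac{d(q)}{q^{\beta}}=\prod_{p^{a}\parallel q}\frac{a+1}{p^{a\beta}},
\end{equation*}
so it suffices to bound each local factor by a constant independent of $a$, and by $1$ for all but finitely many primes.

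\emph{Large primes.} If $p\geq 2^{1/\beta}$, then $p^{a\beta}\geq 2^{a}\geq a+1$ for every $a\geq 1$. Hence each such factor is at most $1$ and these primes can be discarded.

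\emph{Small primes.} If $p<2^{1/\beta}$, we still have $p\geq 2$, so $\frac{a+1}{p^{a\beta}}\leq (a+1)2^{-a\beta}$. Maximise $f(t)=(t+1)e^{-t\beta\log 2}$ over real $t\geq 0$.
\begin{itemize}
  \item The maximum is attained at $t+1=\frac{1}{\beta\log 2}$.
  \item Its value is $\frac{2^{\beta}}{e\beta\log 2}$.
  \item Since $2^{\beta}\leq e$ for the $\beta\leq 1$ we care about, this is at most $\frac{1}{\beta\log 2}$.
\end{itemize}
Each small-prime factor is therefore at most $\frac{1}{\beta\log 2}$. Because $\beta<1$, this quantity is at least $1$, so raising the number of factors only enlarges the bound. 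Consequently
\begin{equation*}
  d(q)\leq \left(\frac{1}{\beta\log 2}\right)^{N_{\beta}}q^{\beta},
\end{equation*}
where $N_{\beta}$ is any upper bound for the number of primes below $2^{1/\beta}$. One may take, for instance, the trivial bound $N_{\beta}\leq 2^{1/\beta}$, or an explicit Rosser--Schoenfeld type bound for $\pi(x)$.

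\emph{Specialising $\beta$.} For $\beta=\frac{5}{11}$ we have $2^{11/5}\approx 4.59$, so $N_{\beta}=4$ is admissible (in fact only $2$ and $3$ occur). This gives $K_{5/11}\leq\left(\frac{11}{5\log 2}\right)^{4}$. For $\beta=\frac{1}{57}$ we need an explicit integer upper bound for $\pi(2^{57})$. The stated exponent $176846309399143769411680$ exceeds both the true value of $\pi(2^{57})$ and the trivial bound $2^{57}\approx 1.44\cdot 10^{17}$, so it is admissible, and we obtain $K_{1/57}\leq\left(\frac{57}{\log 2}\right)^{176846309399143769411680}$.

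The only mildly delicate step is the uniform-in-$a$ bound on the small-prime factors: one has to check that the real-variable maximum, together with $2^{\beta}\leq e$, really yields the clean constant $\frac{1}{\beta\log 2}$. The remaining step is simply choosing a sufficiently large valid count $N_{\beta}$ of small primes; everything else is routine.
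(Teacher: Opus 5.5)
Your proof is correct and is the same multiplicative argument as the paper's: local factors are $\le 1$ for large primes and $\le \frac{1}{\beta\log 2}$ for small primes, and you then count the small primes. The only real difference is your smaller cutoff $2^{1/\beta}$ (from $2^a\ge a+1$) in place of the paper's $e^{1/\beta}$, which lets the trivial bound $\pi(2^{57})\le 2^{57}$ replace the paper's appeal to $\pi(10^{25})$; you should also note that the case $\beta\ge 1$ follows from $d(q)\le q$, as the paper does.
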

We will repeat Tao's proof below.
\begin{proof}
  If $q$ has the prime factorisation $q=p_1^{a_1}\cdot\dots\cdot
  p_k^{a_k}$, then
  \begin{equation*}
    \frac{d(q)}{q^{\beta}} =
    \prod_{j=1}^{k}\frac{d(p_j^{a_j})}{p_j^{a_j \beta}} =
    \prod_{j=1}^{k}\frac{a_j+1}{p_j^{a_j\beta}}
  \end{equation*}
  The numerator is linear, while the denominator is exponential in
  $a_j$, so one would expect the denominator to dominate. However,
  $\beta$ might be small enough that this only happens once $p_j$
  reaches a certain size. We therefore consider small and large primes
  separately.
	
  If $p_j$ is large, say $p_j\geq e^{\frac{1}{\beta}}$, then
  \begin{equation*}
    p_j^{a_j\beta}\geq e^{a_j}\geq a_j+1
  \end{equation*}
  so indeed the denominator dominates the numerator, and the term
  contributes $\leq 1$ to the product, and may be ignored. For a fixed
  $\beta>0$ we need only consider primes $p_j < e^{\frac{1}{\beta}}$.
	
  By Taylor expansion
  \begin{equation*}
    p_j^{a_j\beta} = e^{a_j \beta \log(p_j)} \geq 1+\beta a_j \log(p_j)
  \end{equation*}
  so
  \begin{equation}\label{betalog}
    \frac{a_j + 1}{p_j^{a^j\beta}} \leq
    \frac{a_j+1}{1+a_j\beta\log(p_j)} \leq \frac{1}{\beta \log(2)}.
  \end{equation}
  
  So in the end we get
  \begin{equation*}
    d(q)\leq q^{\beta} \left(\frac{1}{\beta \log(2)}\right)^{\pi(e^{\frac{1}{\beta}})},
  \end{equation*}
  where $\pi(e^{\frac{1}{\beta}})$ denotes the number of primes
  smaller than $e^{\frac{1}{\beta}}$. We will need it for $\beta =
  \frac{5}{11}$ and $\beta = \frac{1}{57}$, where
  $\pi(e^{\frac{11}{5}}) =\pi(9)= 4$ and $\pi(e^{57}) \leq
  \pi(10^{25}) = 176846309399143769411680$ (\cite{Büthe2016}). For
  $\beta$ very small use the crude bound $\pi(e^{\frac{1}{\beta}}) <
  e^{\frac{1}{\beta}} $.
  
  Note that in (\ref{betalog}) we technically use that $\beta \leq
  \frac{1}{\log(2)}$, but this is okay since we are interested in the
  result for small $\beta$. For large $\beta$ use e.g. the trivial
  bound $d(q)\leq q$, which immediately yields the result for $\beta
  \ge 1$.
\end{proof}

\begin{lem}[Log bound]\label{logbound}
  For any $\varepsilon > 0 $
  \begin{equation*}
    \log(q)\leq C_{\varepsilon} q^{\varepsilon},
  \end{equation*}
  where
  \begin{equation*}
    C_{\varepsilon} = \frac{1}{\varepsilon e}.
  \end{equation*}
\end{lem}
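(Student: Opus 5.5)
The plan is to reduce the claim to a one-variable calculus inequality. I take logarithms and prove that the function $f(q) := C_\varepsilon q^{\varepsilon} - \log(q)$ is non-negative for all $q \geq 1$, or more generally for all real $q > 0$. Writing $q = e^{t}$ with $t \in \mathbb{R}$, the statement is equivalent to
\begin{equation*}
  t \leq \frac{1}{\varepsilon e} e^{\varepsilon t} \quad \text{for all } t\in\mathbb{R}.
\end{equation*}
Substituting $s = \varepsilon t$, this becomes $s \leq e^{s-1}$ for all real $s$. This is the classical inequality $e^{x} \geq 1 + x$ applied with $x = s-1$.

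As a check that the constant is the right one, I will also find the minimum directly. The function $g(s) = e^{s-1} - s$ has derivative $g'(s) = e^{s-1} - 1$. This vanishes only at $s=1$, and $g$ is convex, so $s = 1$ is the global minimum, with $g(1) = 0$. In the original variables the extremal point is $q = e^{1/\varepsilon}$, where $\log(q) = 1/\varepsilon$ and $C_\varepsilon q^\varepsilon = \frac{1}{\varepsilon e}\cdot e = 1/\varepsilon$. So equality holds there, and $C_\varepsilon = \frac{1}{\varepsilon e}$ is the optimal constant.

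There is no real obstacle here. The only points needing care are the case $q=1$, which is trivial since $\log 1 = 0$, and making sure the substitution $s=\varepsilon t$ uses $\varepsilon>0$ so that the inequality direction is preserved. The convexity bound $e^{x}\ge 1+x$ can be quoted directly, or justified by the same derivative argument as above.
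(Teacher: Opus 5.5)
Your proof is correct and is essentially the paper's argument: the paper maximises $\log(q)/q^{\varepsilon}$ by finding the critical point $q=e^{1/\varepsilon}$ and evaluating there, which is the extremal computation you also carry out. Your substitution $s=\varepsilon\log q$, which reduces the claim to $e^{s-1}\ge s$, is a slightly cleaner packaging, because it also shows that this critical point is a global extremum, a fact the paper leaves implicit.
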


\begin{proof}
  This is a simple matter of maximising
  $\frac{\log(q)}{q^{\varepsilon}}$:
  \begin{equation*}
    \dv{q}\frac{\log(q)}{q^{\varepsilon}} =
    \frac{1-\varepsilon\log(q)}{q^{1+\varepsilon}} = 0
  \end{equation*}
  for $q=e^{\frac{1}{\varepsilon}}$.  Plugging in this value in gives
  us the maximum of $\frac{\log(q)}{q^{\varepsilon}}$:
  \begin{equation*}
    \frac{\log(q)}{q^{\varepsilon}}\leq\frac{1}{\varepsilon e} \quad \forall q.
  \end{equation*}
\end{proof}

Combining our explicit Burgess bound (see Section
\ref{SectionBurgess}) with (\ref{mylemma1}), we prove a version of
Huang's Lemma 3 from \cite{Huang2020}:
\begin{lem}\label{smalllargelemma}
  For any $\varepsilon_1, \varepsilon_2 > 0$
  \begin{align}
    \sum_{\substack{j_1=1 \\ \gcd(j_1, q_1)=1}}^{N} & \sum_{a=1}^{q_1}
    \exp\left(2 \pi i j_1 \frac{a^2}{q_1}\right) + \exp\left( -2 \pi i
    j_1 \frac{a^2}{q_1}\right) \notag \\ & \leq \begin{cases} 2
      N\sqrt{q_1}& \text{if } q_1 \text{ is a square, }\\ 2\cdot
      4.19253564643679\cdot\sqrt{N} q_1^{\frac{11}{16}} & \text{if }
      q_1 \text{ is not a square and } 1\leq q_1\leq 10^5, \\ 2\cdot
      27\cdot 4^{\frac{3}{16}+\frac{\varepsilon_1+\varepsilon_1^2}{2}}
      d(4q_1)^{\frac52} C_{\varepsilon_1}^{1+\frac{\varepsilon_1}{2}}
      \sqrt{N} q_1^{\frac{11}{16}
        +\frac{\varepsilon_1+\varepsilon_1^2}{2}} & \text{if } q_1
      \text{ is not a square and } 10^5 < q_1 \leq \exp(\exp(8)),
      \\ 2\cdot 5\cdot
      4^{\frac{3}{16}+\frac{\varepsilon_2+\varepsilon_2^2}{2}}
      d(4q_1)^{\frac52} C_{\varepsilon_2}^{1+\frac{\varepsilon_2}{2}}
      \sqrt{N} q_1^{\frac{11}{16}
        +\frac{\varepsilon_2+\varepsilon_2^2}{2}} & \text{if } q_1
      \text{ is not a square and } \exp(\exp(8))< q_1.
    \end{cases} \notag 
  \end{align}
\end{lem}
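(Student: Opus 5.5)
Start from identity (\ref{mylemma1}). For each $j_1$ coprime to $q_1$, the inner Gauss-sum expression equals $0$ when $q_1\equiv 2,3 \bmod 4$. When $q_1\equiv 1\bmod 4$ it equals $2\left(\frac{j_1}{q_1}\right)\sqrt{q_1}$, and when $q_1\equiv 0 \bmod 4$ it equals $2\left(\frac{q_1}{j_1}\right)\sqrt{q_1}$. Hence the left-hand side is at most
\[
2\sqrt{q_1}\,\Big\lvert \sum_{j_1\le N,\ \gcd(j_1,q_1)=1}\chi(j_1)\Big\rvert,
\]
where $\chi$ is either $\left(\frac{\cdot}{q_1}\right)$ or $\left(\frac{q_1}{\cdot}\right)$. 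The coprimality condition is automatic here, since $\chi(j_1)=0$ whenever $\gcd(j_1,q_1)>1$; in the second case this needs a short check via quadratic reciprocity. So it suffices to bound the character sum $\sum_{j_1\le N}\chi(j_1)$ by roughly $\sqrt N q_1^{3/16}$ times the stated constants. Multiplying by $2\sqrt{q_1}$ then gives the exponent $\frac{11}{16}$.

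If $q_1$ is a square, bound the character sum trivially by $N$. This gives $2N\sqrt{q_1}$; note that here $\chi$ may be principal, so no cancellation is available. If $q_1$ is not a square, then $\left(\frac{\cdot}{q_1}\right)$ is non-principal, and so is $\left(\frac{q_1}{\cdot}\right)$, viewed as a character of modulus at most $4q_1$. For $q_1\le 10^5$, apply Lemma \ref{burgesssmallq} directly to obtain the constant $4.19253564643679$. Strictly, that lemma only covers $q^{3/8}\le N\le q^{5/8}$, so I would either assume that range, which is the one used later, or note that the Python computation covers the needed $N$. For larger $q_1$, use (\ref{burgessallqallchar}) from Remark \ref{burgessallcharacters} with modulus $q\le 4q_1$. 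This handles the non-primitive case and yields
\[
m\, d(4q_1)^{5/2}\sqrt N (4q_1)^{3/16}\sqrt{\log(4q_1)\log\log(4q_1)},
\]
with $m=27$ or $m=5$ according to the range.

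The remaining task is to absorb the logarithms into powers, using Lemma \ref{logbound} with parameter $\varepsilon$. First, $\log\log(4q_1)\le C_\varepsilon (\log 4q_1)^{\varepsilon}\le C_\varepsilon^{1+\varepsilon}(4q_1)^{\varepsilon^2}$. Second, $\log(4q_1)\le C_\varepsilon (4q_1)^{\varepsilon}$. Taking square roots gives
\[
\sqrt{\log\log}\cdot\sqrt{\log}\le C_\varepsilon^{1+\varepsilon/2}(4q_1)^{(\varepsilon+\varepsilon^2)/2}.
\]
Writing $(4q_1)^{3/16+(\varepsilon+\varepsilon^2)/2}=4^{\dots}q_1^{\dots}$ and multiplying by $2\sqrt{q_1}$ produces exactly the stated third and fourth cases, with $\varepsilon=\varepsilon_1$ or $\varepsilon_2$.

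The main obstacle is bookkeeping rather than ideas. One point is ensuring the modulus ranges line up: $q_1$ lies in a given range but the character modulus $4q_1$ may cross a threshold. I would handle this by observing that the constant $5$ is smaller than $27$, or by applying the bounds in terms of the actual modulus and enlarging the constant. The other point is that Bordignon's bounds require $N<q$; when $N\ge q$, I would reduce by periodicity, since a full period of a non-principal character sums to zero, which only improves the bound.
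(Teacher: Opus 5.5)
Your proposal is essentially the paper's proof: reduce via (\ref{mylemma1}) to $2\sqrt{q_1}$ times a sum of $\left(\frac{\cdot}{q_1}\right)$ or $\left(\frac{q_1}{\cdot}\right)$, bound trivially when $q_1$ is a square, use Lemma \ref{burgesssmallq} for $q_1\le 10^5$, and use (\ref{burgessallqallchar}) with Lemma \ref{logbound} in the two larger ranges, where your unpacking $\sqrt{\log x\,\log\log x}\le C_\varepsilon^{1+\varepsilon/2}x^{(\varepsilon+\varepsilon^2)/2}$ at $x=4q_1$ reproduces exactly the constants the paper leaves implicit. The one inaccuracy is in your handling of the $N$-range caveat: neither suggested fix works, because the later application integrates over all $1\le u\le J/d$ (not only $q_1^{3/8}\le u\le q_1^{5/8}$), and the code behind Lemma \ref{burgesssmallq} only checks $q^{3/8}\le N\le q^{5/8}$; the range $N<q_1^{3/8}$ is trivial since $N\le\sqrt{N}\,q_1^{3/16}$ there, and $N\ge q_1$ reduces by periodicity, but $q_1^{5/8}<N<q_1$ genuinely needs an extra input such as an explicit P\'olya--Vinogradov bound, and the paper's own proof also passes over this point silently.
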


\begin{proof}
  If $q_1$ is a square, then applying Lemma \ref{huanglemma1} gives us
  \begin{align*}
    \sum_{\substack{j_1=1 \\ \gcd(j_1, q_1)=1}}^{N} \sum_{a=1}^{q_1}
    \exp\left(2 \pi i j_1 \frac{a^2}{q_1}\right) + \exp\left( -2 \pi i
    j_1 \frac{a^2}{q_1}\right) &\leq 2\sqrt{q_1}\sum_{\substack{j_1=1
        \\ \gcd(j_1, q_1)=1}}^{N} \chi(j_1) \\ &\leq 2\sqrt{q_1} N,
  \end{align*}
  where $\chi(j_1) = \left(\frac{j_1}{q_1}\right)$ if $q_1 \equiv 1
  \mod 4$ and $\chi(j_1) = \left(\frac{q_1}{j_1}\right)$ if $q_1
  \equiv 0 \mod 4$.
	
  For $q_1$ not a square and $q_1 > 10^5$, using Lemma
  \ref{huanglemma1}, we have
  \begin{align*}
    \sum_{\substack{j_1=1 \\ \gcd(j_1, q_1)=1}}^{N} \sum_{a=1}^{q_1}
    \exp\left(2 \pi i j_1 \frac{a^2}{q_1}\right) + \exp\left( -2 \pi i
    j_1 \frac{a^2}{q_1}\right) \leq 2 \sqrt{q_1} \sum_{\substack{j_1=1
        \\ \gcd(j_1, q_1)=1}}^{N} \chi(j_1),
  \end{align*}
  where $\chi(j_1) = \left(\frac{j_1}{q_1}\right)$ if $q_1 \equiv 1
  \mod 4$, $\chi(j_1) = \left(\frac{q_1}{j_1}\right)$ if $q_1 \equiv 0
  \mod 4$ and is zero otherwise. Either way $\chi$ is a Dirichlet
  character of modulus $\leq 4q_1$. Applying Remark
  \ref{burgessallcharacters} as well as Lemma \ref{logbound}, we get
  for $q_1$ not a square and $10^5<q_1\leq \exp(\exp(8))$:
  \begin{align*}
    \leq 2\cdot
    4^{\frac{3}{16}+\frac{\varepsilon_!+\varepsilon_1^2}{2}} 27
    C_{\varepsilon_1}^{1+\frac{\varepsilon_1}{2}} \sqrt{N}
    d(4q_1)^{\frac52} q_1^{\frac{11}{16} +
      \frac{\varepsilon_1+\varepsilon_1^2}{2} }
	\end{align*}
  and for $q_1$ not a square and $\exp(\exp(8)) < q_1$:
  \begin{align*}
    \leq 2\cdot
    4^{\frac{3}{16}+\frac{\varepsilon_2+\varepsilon_2^2}{2}} 5
    C_{\varepsilon_2}^{1+\frac{\varepsilon_2}{2}}
    \sqrt{N}d(4q_1)^{\frac52} q_1^{\frac{11}{16} +
      \frac{\varepsilon_2+\varepsilon_2^2}{2} }.
  \end{align*}
	
  For $q_1$ not a square and $1\leq q_1 \leq 10^5$ apply Lemma
  \ref{huanglemma1} and then Lemma \ref{burgesssmallq}.
\end{proof}

We now split the sum over $d\mid q$ in
(\ref{countingfunctionexpression}) into the four cases,
\begin{enumerate}
  \item $q=q_1 d$ and $q_1$ is a square, denoted $q_1 = \square$
    below,
  \item $q=q_1 d$, $q_1$ is not a square,denoted $q_1 \neq \square$, and $1\leq
    q_1\leq 10^5$,
  \item $q=q_1 d$, $q_1$ is not a square, and $10^5 <
    q_1\leq \exp(\exp(8))$ and
  \item $q=q_1 d$, $q_1$ is not a square, and $\exp(\exp(8)) < q_1$.
\end{enumerate}
We then apply Lemma \ref{smalllargelemma}, and in the case $q=q_1 d$,
$q_1$ is not a square, and $\exp(\exp(8)) < q_1$ Lemma
\ref{divisorboundtao} for some $\beta_2 >0$ to obtain
\begin{align}
  (\ref{countingfunctionexpression}) &\leq \frac{\pi^2}{4}\left[
    2\sum_{q=q_1 d, q_1=\square} \frac{d^2}{J^2} \sqrt{q_1}
    \int_{1}^{\frac{J}{d}} u ~\mathrm{d}u + 2\cdot 4.19253564643679
    \sum_{q=q_1d, q_1\neq \square, 1\leq q_1\leq 10^5} \frac{d^2}{J^2}
    q_1^{\frac{11}{16}}\int_{1}^{\frac{J}{d}} \sqrt{u} ~\mathrm{d}u
    \right.\nonumber\\ &\qquad \left. {} + 2\cdot
    4^{\frac{3}{16}+\frac{\varepsilon_1+\varepsilon_1^2}{2}} 27
    C_{\varepsilon_1}^{1+\frac{\varepsilon_1}{2}} \sum_{q=q_1d,
      q_1\neq\square, 10^5 < q_1\leq \exp(\exp(8))} \frac{d^2}{J^2}
    q_1^{\frac{11}{16}+\frac{\varepsilon_1+\varepsilon_1^2}{2}}
    d(4q_1)^{\frac52}\int_{1}^{\frac{J}{d}} \sqrt{u} ~ \mathrm{d}u
    \right.\nonumber\\ &\qquad \left. {} + 2\cdot
    4^{\frac{3}{16}+\frac{5\beta_2}{2}+\frac{\varepsilon_1+\varepsilon_1^2}{2}}
    5 K_{\beta_2}^{\frac{5}{2}}
    C_{\varepsilon_2}^{1+\frac{\varepsilon_2}{2}} \sum_{q=q_1d,
      q_1\neq\square, \exp(\exp(8)) < q_1} \frac{d^2}{J^2}
    q_1^{\frac{11}{16}+\frac{5\beta}{2}+\frac{\varepsilon_2+\varepsilon_2^2}{2}}
    \int_{1}^{\frac{J}{d}} \sqrt{u} ~ \mathrm{d}u +\frac{q}{J} \right]
  \notag\\ &\leq \frac{\pi^2}{4}\left[ \sum_{q=q_1 d, q_1=\square}
    \sqrt{q_1} \frac{d^2}{J^2} \frac{J^2}{d^2} + 2\cdot
    4.19253564643679 \frac{2}{3}\sum_{q=q_1d, q_1\neq \square, 1\leq
      q_1\leq 10^5} q_1^{\frac{11}{16}} \frac{d^2}{J^2}
    \left(\frac{J}{d}\right)^{\frac{3}{2}} \right.\nonumber \\ &\qquad
    \left. {} + 2\cdot 4^{\frac{3}{16}
      +\frac{\varepsilon_1+\varepsilon_1^2}{2}} 27
    C_{\varepsilon_1}^{1+\frac{\varepsilon_1}{2}}
    \frac{2}{3}\sum_{q=q_1d, q_1\neq\square, 10^5 < q_1 \leq
      \exp(\exp(8))}
    q_1^{\frac{11}{16}+\frac{\varepsilon_1+\varepsilon_1^2}{2}}
    d(4q_1)^{\frac52} \frac{d^2}{J^2}
    \left(\frac{J}{d}\right)^{\frac{3}{2}} \right.\nonumber \\ &\qquad
    \left. {} + 2\cdot 4^{\frac{3}{16}
      +\frac{5\beta_2}{2}+\frac{\varepsilon_2+\varepsilon_2^2}{2}} 5
    K_{\beta_2}^{\frac52}
    C_{\varepsilon_2}^{1+\frac{\varepsilon_2}{2}}
    \frac{2}{3}\sum_{q=q_1d, q_1\neq\square, \exp(\exp(8)) < q_1}
    q_1^{\frac{11}{16}+\frac{5\beta_2}{2}+\frac{\varepsilon_2+\varepsilon_2^2}{2}}
    \frac{d^2}{J^2} \left(\frac{J}{d}\right)^{\frac{3}{2}} +
    \frac{q}{J} \right]. \label{countingfunctionexpression2}
\end{align}

We have $d(4q_1)\leq 3d(q_1)$. In the case $q=q_1 d$, $q_1$ is not a
square, and $10^5 < q_1\leq \exp(\exp(8))$, applying Lemma
\ref{divisorboundlargeq} to $d(q_1)$ yields (for $10^5\leq q_1\leq
\exp(\exp(8))$):
\begin{equation*}
  \log(d(q_1)) \leq \frac{\log(2)\log(q)}{\log(\log(q))-1.39177} \leq
  \frac{\log(2)e^8}{8-1.39177},
\end{equation*}
so
\begin{equation*}
  d(4q_1)^{\frac52} \leq 3^{\frac52} e^{\frac{5\log(2)e^8}{2\left(8-1.39177\right)}}.
\end{equation*}
Employing the bounds 
\begin{equation*}
  \sum_{\substack{q=q_1 d \\ \substack{q_1 \neq \square \\ q_1\leq
        10^5}}} 1\leq 99684, \quad
  \sum_{\substack{q=q_1d\\ \substack{q_1\neq\square \\ 10^5 < q_1 \leq
        \exp(\exp(8))}}} 1 \leq (\exp(\exp(8))-10^5), \text{ and
  }\sum_{\substack{q=q_1 d \\ \substack{q_1 \neq \square
        \\ \exp(\exp(8)) < q_1 }}} 1 \leq d(q) \leq K_{\beta_2}
  q^{\beta_2}.
\end{equation*}
gives us the following upper bound for the counting function:
\begin{align}
  (\ref{countingfunctionexpression2}) &\leq \frac{\pi^2}{4}\left[
    \sum_{q=q_1 d, q_1=\square}\sqrt{q_1} + 2\cdot 4.19253564643679
    \frac{2}{3} J^{\frac{-1}{2}} q^{\frac{11}{16}} 99684
    \right.\nonumber \\ &\qquad \left. {} + 2\cdot 27\cdot
    4^{\frac{3}{16}+\frac{\varepsilon_1+\varepsilon_1^2}{2}}
    C_{\varepsilon_1}^{1+\frac{\varepsilon_1}{2}}
    \frac{2}{3}J^{\frac{-1}{2}}q^{\frac{11}{16}+\frac{\varepsilon_1+
        \varepsilon_1^2}{2}} 3^{\frac52}
    e^{\frac{5\log(2)e^8}{2\left(8-1.39177\right)}}
    \left(\exp(\exp(8))-10^5\right) \right.\nonumber \\ &\qquad
    \left. {} + 2\cdot 5\cdot
    4^{\frac{3}{16}+\frac{5\beta_2}{2}+\frac{\varepsilon_2+\varepsilon_2^2}{2}}
    K_{\beta_2}^{\frac{7}{2}}C_{\varepsilon_2}^{1+\frac{\varepsilon_2}{2}}
    \frac{2}{3}J^{\frac{-1}{2}}q^{\frac{11}{16}+\frac{7\beta_2}{2}+\frac{\varepsilon_2+
        \varepsilon_2^2}{2}} + \frac{q}{J} \right] \notag
\end{align}

For $r$ the biggest square dividing $q$, applying Lemma
\ref{divisorboundtao} for some $\beta_3>0$ yields
\begin{align*}
  \sum_{\substack{q=q_1 d \\ q_1 = \square}} \sqrt{q_1} =
  \sum_{q=r_1^2d}r_1 \leq r\sum_{r_1\mid r} 1 = r d(r) \leq
  r^{1+\beta_3} K_{\beta_3}.
\end{align*}
Furthermore, since $J=\left\lfloor \frac{1}{6\kappa\psi(q)}
\right\rfloor \left(\neq 0\right)$, we have $J^{-1} \leq
12\kappa\psi(q).$ So
\begin{align*}
  \lambda(\mathcal{B}'(\psi, \kappa)^c) &\leq \kappa
  K_{\beta_3}\frac{\pi^2}{2}\sum_{q=1}^{\infty}
  \frac{\psi(q)}{q}r^{1+\beta_3} \nonumber \\ & {} + \kappa^{\frac32}
  \left( \frac{2 \pi^2 4.19253564643679\sqrt{12}99684}{3}
  \sum_{q=1}^{\infty} \psi(q)^{\frac32} q^{\frac{-5}{16}}
  \right.\nonumber \\ &\qquad \left. {} +\frac{2 \pi^2
    4^{\frac{3}{16}+\frac{\varepsilon_1 +
        \varepsilon_1^2}{2}}27\sqrt{12}C_{\varepsilon_1}^{1+\frac{\varepsilon_1}{2}}
    3^{\frac52} e^{\frac{5\log(2)e^8}{2\left(8-1.39177\right)}}
    \left(\exp(\exp(8))-10^5\right) }{3} \sum_{q=1}^{\infty}
  \psi(q)^{\frac32} q^{\frac{-5}{16}+
    \frac{\varepsilon_1+\varepsilon_1^2}{2}} \right.\nonumber
  \\ &\qquad \left. {} +\frac{2\pi^2
    4^{\frac{3}{16}+\frac{\varepsilon_2+\varepsilon_2^2}{2}+\frac{5\beta_2}{2}}5\sqrt{12}C_{\varepsilon_2}^{1+\frac{\varepsilon_2}{2}}K_{\beta_2}^{\frac{7}{2}}}{3}
  \sum_{q=1}^{\infty} \psi(q)^{\frac32}
  q^{\frac{-5}{16}+\frac{\varepsilon_2+\varepsilon_2^2}{2}+\frac{7\beta_2}{2}}
  \right) \nonumber \\ & {} + \kappa^2 6\pi^2 \sum_{q=1}^{\infty}
  \psi(q)^2.
\end{align*}

Choose $\beta_2 = \frac{1}{57}$, $\beta_3 = \frac{5}{11}$,
$\varepsilon_1 = 0.11237$ and $\varepsilon_2 = 0.00218819$.  For the
first series, we have
\begin{align*}
  \sum_{q=1}^{\infty} \frac{\psi(q)}{q}r^{1+\beta_3} &\leq
  \sqrt{\sum_{q=1}^{\infty} \psi(q)^2} \sqrt{\sum_{q=1}^{\infty}
    r^{2\left(1+\beta_3\right)}q^{-2}} \\ &\leq \sqrt{S_{\psi}}
  \sqrt{\sum_{r=1}^{\infty} \sum_{t=1}^{\infty} \lvert \mu(t)\rvert
    r^{2+\frac{10}{11}} (r^2t)^{-2}} \\ &\leq \sqrt{S_{\psi}}
  \sqrt{\sum_{r=1}^{\infty} r^{\frac{-12}{11}} \sum_{t=1}^{\infty}
    t^{-2} } \\ &\leq
  \sqrt{S_{\psi}}\sqrt{\zeta\left(\frac{12}{11}\right)}\sqrt{\frac{\pi^2}{6}},
\end{align*}
where $\mu$ is the M{\"o}bius function. Note that we estimate
$\vert\mu(t)\vert$ by $1$ here. A slightly sharper estimate would
follow if one only considered square free $t$, since the function in
$0$ elsewhere, but the improvement is so marginal that we decided not
to pursue this further. For the second, third and fourth, we have
\begin{align*}
  \sum_{q=1}^{\infty} \psi(q)^{\frac32}q^{\frac{-5}{16}},
  \sum_{q=1}^{\infty}
  \psi(q)^{\frac32}q^{\frac{-5}{16}+\frac{\varepsilon_1+\varepsilon_1^2}{2}},
  \sum_{q=1}^{\infty}
  \psi(q)^{\frac32}q^{\frac{-5}{16}+\frac{\varepsilon_2+\varepsilon_2^2}{2}+\frac{7\beta_2}{2}}
  \leq S_{\psi}
\end{align*}
since $\psi(q)\geq q^{\frac{-5}{8}+\eta}$.
So $\lambda\left(\mathcal{B}'(\psi, \kappa)^c\right)$ is
\begin{align*}
  &\leq \kappa \frac{\pi^2
    \sqrt{\zeta\left(\frac{12}{11}\right)}\sqrt{\frac{\pi^2}{6}}
    \left(\frac{11}{5\log(2)}\right)^{4} }{2} \sqrt{S_{\psi}} \\ & +
  \kappa^{\frac32} \left( \frac{2\pi^2\sqrt{12} 99684 \cdot
    4.192423564643679}{3} \right.\nonumber \\ &\qquad \left. {} +
  \frac{2\pi^2 27\sqrt{12}\cdot 4^{\frac{3}{15} +
      \frac{0.11237+0.11237^2}{2}} 3^{\frac52}
    e^{\frac{5\log(2)e^8}{2(8-1.39177)}}\left(e^{e^8}-10^5\right)
    \left(\frac{1}{e\cdot 0.11237}\right)^{1+\frac{0.11237}{2}}}{3}
  \right.\nonumber \\ &\qquad \left. {} + \frac{2\pi^2 5
    \sqrt{12}\cdot 4^{\frac{3}{16} + \frac{0.00218819+0.00218819^2}{2}
      + \frac{5}{2\cdot 57}}
    \left(\frac{57}{\log(2)}\right)^{\frac{7}{2}176846309399143769411680}
    \left(\frac{1}{e\cdot
      0.00218819}\right)^{1+\frac{0.00218819}{2}}}{3 } \right)S_{\psi}
  \\ & + \kappa^2 6\pi^2 S_{\psi}
\end{align*}
which is $<\delta$ for 
\begin{align*}
  \kappa(\delta) < \min &\left\{ \frac{\delta}{\sqrt{S_{\psi}}}
  \frac{2 \sqrt{6} (5\log(2))^4}{3\pi^3
    \sqrt{\zeta\left(\frac{12}{11}\right)} 11^4},
  \sqrt{\frac{\delta}{S_{\psi}}} \frac{1}{\sqrt{18}\pi},
  \right.\nonumber\\ &\left. {}
  \left(\frac{\delta}{S_{\psi}}\right)^{\frac23} \left(\frac{1}{2\pi^2
    \sqrt{12}} \right)^{\frac23}\Bigg( 99684\cdot 4.19253564643679
  \right.\nonumber\\ & \left. {} +27\cdot 4^{\frac{3}{16} +
    \frac{0.11237+0.11237^2}{2}} 3^{\frac52} \left(e^{e^8}-10^5\right)
  e^{\frac{5\log(2) e^8}{2(8-1.39177)}}\left(\frac{1}{e\cdot
    0.11237}\right)^{1+\frac{0.11237}{2}} \right.\nonumber\\ &
  \left. {} +5\cdot 4^{\frac{3}{16}+\frac{0.00218819+0.00218819^2}{2}
    + \frac{5}{2\cdot 57}}
  \left(\frac{57}{\log(2)}\right)^{\frac{7}{2}176846309399143769411680}
  \left(\frac{1}{e\cdot
    0.00218819}\right)^{1+\frac{0.00218819}{2}}\Bigg)^{\frac{-2}{3}}
  \right\} \\ &\simeq \min\left\{ \frac{\delta}{\sqrt{S_{\psi}}}
  0.00015248265228 , \sqrt{\frac{\delta}{S_{\psi}}} 0.07502635967975,
  \left(\frac{\delta}{S_{\psi}}\right)^{\frac23}
  10^{-10^{23.89775276641198}}\right\}.
\end{align*}
\hfill $\square$

\begin{rem*}
  The constant $\sim 10^{-10^{23.89775276641198}}$ is very small. The
  main problem here is our divisor function bound from Lemma
  \ref{divisorboundtao}. There are two ways to address the
  problem. Large values of $d(q)$ occur at smooth numbers,
  i.e. numbers with only small prime divisrs, so one could consider
  e.g., for a fixed $n$, only $q$ of the form $q=p_1 p_2 \dots p_n$,
  so $d(q)\leq 2^n$. Alternatively by looking at only very large
  values of $q$, we also get a more reasonable constant in our divisor
  function bound. The former is explored in Theorem \ref{thmp1...pn}
  and Theorem \ref{thmprime} (for $n=1$), while latter is Theorem
  \ref{thmlargeq}.
\end{rem*}

\subsection{Proof of Theorem \ref{thmlargeq}}\label{SectionLargeq} %--------PROOF
%FOR
%LARGE
%q--------
If we consider only very large $q$, we can improve Lemmas
\ref{divisorboundtao} and \ref{logbound} as follows:
\begin{lem}[From Robin's thesis \cite{Robin1983}, also mentioned in the survey paper \cite{NicolasSurvey}]\label{divisorboundlargeq}
  For $q\geq 56$
  \begin{equation*}
    \log(d(q)) \leq \frac{\log(q)\log(2)}{\log(\log(q))-1.39177}
  \end{equation*}
  In particular, for $q\geq \exp(\exp(41))$, we get $d(q)\leq q^{\frac{1}{57}}$.
\end{lem}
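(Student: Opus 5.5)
The first inequality is Robin's theorem, which I would take as given from \cite{Robin1983} (see also \cite{NicolasSurvey}). The substance is not to be reproven here, so the plan is only to deduce the ``in particular'' clause from it by a monotonicity argument and a numerical check.

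\textbf{Step 1: reduce to an inequality in $\log(\log(q))$.} Fix $q\geq \exp(\exp(41))$. Then certainly $q\geq 56$, so Robin's bound applies. Exponentiating, $d(q)\leq q^{1/57}$ follows once
\begin{equation*}
  \frac{\log(2)}{\log(\log(q))-1.39177}\leq \frac{1}{57}.
\end{equation*}
Here the denominator is positive, since $\log(\log(q))\geq 41>1.39177$.

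\textbf{Step 2: solve for $\log(\log(q))$.} The displayed inequality is equivalent to
\begin{equation*}
  \log(\log(q))\geq 1.39177+57\log(2).
\end{equation*}
The left side of the Step 1 inequality is decreasing in $q$, so it suffices to check this at the endpoint $\log(\log(q))=41$.

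\textbf{Step 3: numerical check.} We have $57\log(2)\approx 39.5094$, so $1.39177+57\log(2)\approx 40.9012<41$. Hence for all $q\geq \exp(\exp(41))$,
\begin{equation*}
  \log(d(q))\leq \frac{\log(q)}{57},
\end{equation*}
that is, $d(q)\leq q^{1/57}$.

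The only delicate point is this arithmetic. The threshold $41$ is chosen just above $40.90$, so I would verify $57\log(2)<39.6083$ with enough digits to be safe. No other obstacle is expected, since the main inequality is imported from the literature.
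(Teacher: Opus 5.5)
Your proposal is correct and matches the paper. The paper gives no proof of its own: it cites Robin's bound and leaves the ``in particular'' clause as the immediate numerical consequence that you spell out, namely $1.39177+57\log(2)\approx 40.9012<41$. One small nitpick: the exact threshold is $41-1.39177=39.60823$, so you should round this down rather than up to $39.6083$ when stating what $57\log(2)$ must beat, though the margin of about $0.1$ makes this immaterial.
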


\begin{lem}
  For $q\geq \exp(\exp(41))$
  \begin{equation*}
    \log(q)\leq q^{\frac{41}{e^{41}}}.
  \end{equation*}
\end{lem}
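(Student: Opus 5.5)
We will reduce the inequality to a monotonicity statement about $t/\log(t)$. We do not use Lemma \ref{logbound}: it only gives $\log(q)\le C_\varepsilon q^\varepsilon$ with $C_\varepsilon=\frac{1}{\varepsilon e}>1$, whereas here we want constant $1$ with a specific tiny exponent. The gain comes from the restriction $q\ge\exp(\exp(41))$. Since both sides are positive for $q>1$, take logarithms. The claim is then equivalent to
\begin{equation*}
  \log(\log(q)) \leq \frac{41}{e^{41}}\log(q).
\end{equation*}
Substitute $t=\log(q)$, so that $t\geq e^{41}$. The claim becomes $\log(t)\leq \frac{41}{e^{41}}\, t$, or equivalently (since $\log(t)>0$)
\begin{equation*}
  \frac{t}{\log(t)} \geq \frac{e^{41}}{41} \quad \text{for all } t\geq e^{41}.
\end{equation*}

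The key step is that $g(t)=t/\log(t)$ is increasing for $t>e$. This follows from
\begin{equation*}
  g'(t)=\frac{\log(t)-1}{\log(t)^2}>0 \quad \text{for } t>e,
\end{equation*}
which is the same calculus as in the proof of Lemma \ref{logbound}, now viewed from the other side of the critical point. Hence for $t\geq e^{41}$ we get $g(t)\geq g(e^{41})=e^{41}/41$. Unwinding the substitution gives $\log(q)\le q^{41/e^{41}}$, with equality exactly at $q=\exp(\exp(41))$.

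There is no real obstacle. The only point requiring care is to note where the threshold is used: the exponent $41/e^{41}$ is exactly $\log(t_0)/t_0$ at $t_0=e^{41}$, so the bound is sharp at the endpoint and holds beyond it by monotonicity. The same argument shows more generally that $\log(q)\le q^{\log(T)/T}$ for $\log(q)\ge T\ge e$. This is the form in which the lemma will replace $C_{\varepsilon_2}$ (with $\varepsilon=41/e^{41}$ and constant $1$) in the constants of Theorem \ref{thmlargeq}.
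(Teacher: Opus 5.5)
Your proof is correct and follows the same route as the paper, which takes logarithms to reduce the claim to $\log(\log(q))/\log(q) \le 41/e^{41}$. You also supply the monotonicity of $t/\log(t)$ for $t>e$, which the paper leaves implicit and which is what carries the inequality from the endpoint $q=\exp(\exp(41))$ to all larger $q$.
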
 

\begin{proof}
  We have $\log(q) \leq q^{\frac{41}{e^{41}}}$ if and only if
  $\frac{41}{e^{41}} \geq \frac{\log(\log(q))}{\log(q)}$.
\end{proof}
Aside from having these improved lemmas, the proof is exactly the same
as before, only with $K_{\beta}$ and $C_{\varepsilon}$ both replaced
by $1$ and with $\beta$ and $\varepsilon$ replaced by $\frac{1}{57}$
respectively $\frac{41}{e^{41}}$. Picking up from
(\ref{countingfunctionexpression}) we have
\begin{align*} 
  \sum_{\substack{a\leq q \\ \lVert \sfrac{a^2}{q} \rVert \leq
      3\kappa\psi(q)}} 1 &\leq \frac{\pi^2}{4}\left[ \sum_{d\mid q}
    \frac{d^2}{J^2} \int_{1}^{\frac{J}{d}} \sum_{\substack{j_1=1
        \\ \gcd(j_1, q_1)=1}}^{u} \sum_{a=1}^{q_1} e^{2\pi i
      j_1\frac{a^2}{q_1}} + e^{-2\pi i j_1\frac{a^2}{q_1}} \mathrm{d}u
    + \frac{q}{J}\right] \nonumber \\ &\leq \frac{\pi^2}{4} \left[
    \sum_{\substack{q=q_1d \\ q_1 = \square}} \frac{d^2}{J^2}
    \int_{1}^{\frac{J}{d}} 2\sqrt{q_1}u~\mathrm{d}u \right. \\ &\qquad
    \left. {} + \sum_{\substack{q=q_1 d\\q_1\neq \square\\1\leq
        q_1\leq 10^5}} \frac{d^2}{J^2} \int_{1}^{\frac{J}{d}} 2\cdot
    4.19253564643679 q_1^{\frac{11}{16}} \sqrt{u}
    ~\mathrm{d}u\right. \\ &\qquad \left. {} + \sum_{\substack{q=q_1
        d\\q_1\neq \square\\10^5 < q_1\leq \exp(\exp(8))}}
    \frac{d^2}{J^2} \int_{1}^{\frac{J}{d}} 2\cdot 27\cdot
    4^{\frac{3}{16}+\frac{\varepsilon+\varepsilon^2}{2}}
    d(4q_1)^{\frac{5}{2}}
    q_1^{\frac{11}{16}+\frac{\varepsilon+\varepsilon^2}{2}}
    \sqrt{u}~\mathrm{d}u + \frac{q}{J}\right]
\end{align*}
This time we use the estimates
\begin{align*}
  \sum_{\substack{q=q_1 d \\ q_1 \neq \square \\ 1\leq q_1\leq 10^5}}
  1, \sum_{\substack{q=q_1 d \\ q_1 \neq \square \\ 10^5 < q_1 }} 1
  \leq d(q) < q^{\frac{1}{57}},
\end{align*}
as well as 
\begin{align*}
  q^{\frac{1}{57}} q_1^{\frac{11}{16}} \sqrt{d}\leq q^{\frac{11}{16}+\frac{1}{57}} 
\end{align*}
and
\begin{align*}
  q_1^{\frac{11}{16} + \frac{7}{2 \cdot 57}+
    \frac{\frac{41}{e^{41}}+\left(\frac{41}{e^{41}}\right)^2}{2}}
  \sqrt{d} \leq q^{\frac{11}{16} + \frac{7}{2\cdot 57}+
    \frac{\frac{41}{e^{41}}+\left(\frac{41}{e^{41}}\right)^2}{2}}.
\end{align*}
Apply also
\begin{equation*}
  \sum_{\substack{q=q_1 d \\ q_1 = \square}} \sqrt{q_1} =
  \sum_{q=r_1^2 d} r_1 \leq r\sum_{r_1\mid r} 1 \leq r d(q)\leq r
  q^{\frac{1}{57}},
\end{equation*}
where $r$ is the largest square dividing $q$.  Putting everything
together, exactly like in the proof of Theorem \ref{thmallq}, we get
\begin{align*}
  \lambda\left(\mathcal{B}'(\kappa, \psi)^c\right) &\leq \kappa
  \frac{\pi^2}{2} \sum_{q=1}^{\infty} \psi(q) q^{-1} r^{\frac{58}{57}}
  \\ &\quad {} + \kappa^{\frac{3}{2}} \frac{2\pi^2 \sqrt{12}}{3}
  \left(4.19253564643679 \sum_{q=1}^{\infty} \psi(q)^{\frac32}
  q^{\frac{-5}{16}+\frac{1}{57}} \right.\\ &\qquad \left. {} + 27\cdot
  4^{\frac{3}{16}+\frac{\frac{41}{e^{41}} +
      \left(\frac{41}{e^{41}}\right)^2}{2} + \frac{5}{2\cdot
      57}}\sum_{q=1}^{\infty}
  \psi(q)^{\frac32}q^{\frac{-5}{16}+\frac{\frac{41}{e^{41}} +
      \left(\frac{41}{e^{41}}\right)}{2} + \frac{7}{2\cdot 57}\ }
  \right)\\ & \quad {} + \kappa^2 6\pi^2 \sum_{q=1}^{\infty}
  \psi(q)^2.
\end{align*}
For the first series we have 
\begin{align*}
  \sum_{q=1}^{\infty} \psi(q) q^{-1} r^{\frac{58}{57}} &\leq
  \sqrt{\sum_{q=1}^{\infty} \psi(q)} \sqrt{\sum_{r=1}^{\infty}
    \sum_{t=1}^{\infty} \lvert \mu(t)\rvert r^{2\frac{58}{57}} (r^2
    t)^{2} } \\ &\leq \sqrt{S_{\psi}} \sqrt{ \sum_{r=1}^{\infty}
    r^{\frac{-112}{57}} \sum_{t=1}^{\infty} t^{-2} }\\ &\leq
  \sqrt{S_{\psi}}\sqrt{\zeta\left(\frac{112}{57}\right)
    \frac{\pi^2}{6}}.
\end{align*}
Since $\psi(q)\geq q^{-\frac{5}{8}+\eta}$, the other three series are
$\leq S_{\psi}$. Thus
\begin{align*}
  \lambda\left(\mathcal{B}'(\kappa, \psi)^c\right) &\leq \kappa
  \frac{\pi^2}{2}\sqrt{S_{\psi}}\sqrt{\frac{\pi^2}{6}\zeta\left(\frac{112}{57}\right)}
  + \kappa^2 6\pi^2 S_{\psi}\\ &\quad {} + \kappa^{\frac{3}{2}}
  \frac{2\pi^2 \sqrt{12}}{3} S_{\psi}\left(4.19253564643679+ 27\cdot
  4^{\frac{3}{16}+\frac{\frac{41}{e^{41}} +
      \left(\frac{41}{e^{41}}\right)^2}{2} + \frac{5}{2\cdot 57}}
  \right).
\end{align*}
This is $<\delta$ for
\begin{align*}
  \kappa(\delta) < \min&\left\{ \frac{\delta}{ \sqrt{S_{\psi}}}
  \frac{2}{3\pi^2
    \sqrt{\frac{\pi^2}{6}\zeta\left(\frac{112}{57}\right)}},
  \sqrt{\frac{\delta}{S_{\psi}}} \sqrt{\frac{1}{18\pi^2}} \right. \\ &
  \left. {} \left(\frac{\delta}{S_{\psi}}\right)^{\frac23}
  \left(2\pi^2\sqrt{12}\left(4.19253564643679+ 27\cdot
  4^{\frac{3}{16}+\frac{5}{2\cdot 57} +
    \frac{\frac{41}{e^{41}}+\left(\frac{41}{e^{41}}\right)^2}{2}}
  \right)\right)^{\frac{-2}{3}}\right\} \\ \simeq \min&\left\{
  \frac{1}{6\max_{n\in\mathbb{N}}\psi(n)},
  \frac{\delta}{\sqrt{S_{\psi}}} 0.04064400,
  \sqrt{\frac{\delta}{S_{\psi}}} 0.07502635,
  \left(\frac{\delta}{S_{\psi}}\right)^{\frac23}0.00499238\right\}.
\end{align*}
\hfill $\square$

\begin{rem*}
  The size of the constants in $\kappa(\delta)$ are much more
  managable here, but it comes at the cost of assuming $q >
  \exp(\exp(41))$, which is again very large. Once more, this is due
  to our divisor function bound!
\end{rem*}

\subsection{Proof of Theorem \ref{thmp1...pn}}%--------PROOF FOR q=p_1...p_n--------
Since $q$ has at most $n$ prime divisors, we have $d(q)\leq
2^n$. As in \S \ref{SectionAllq} equation (\ref{countingfunctionexpression2}) (but not using the divisor function bound from \S \ref{SectionAllq}) we have
\begin{align*}
  \sum_{\substack{a\leq q \\ \lVert \frac{a^2}{q}\leq 3
      \kappa\psi(q)}} 1 \leq \frac{\pi^2}{4} &
  \left[\sum_{\substack{q=q_1d\\q_1=\square}}\sqrt{q_1} + 2\cdot
    4.192535.64643679\cdot \frac23 J^{-\frac12}q^{\frac{11}{16}}99684
    \right. \\ &\left. {} +2\cdot 27\cdot
    4^{\frac{3}{16}+\frac{\varepsilon+\varepsilon^2}{2}}\left(\frac{1}{e\varepsilon}\right)^{1+\frac{\varepsilon}{2}}
    \frac23
    d(4q)^{\frac52}d(q)q^{\frac{11}{16}+\frac{\varepsilon+\varepsilon^2}{2}}
    J^{-\frac12} + \frac{q}{J} \right].
\end{align*}
Using $d(q)\leq 2^n$, $J^{-1} \leq 12\kappa\psi(q)$, and
\begin{equation*}
  \sum_{\substack{q=q_1d\\q_1=\square}}\sqrt{q_1} \leq rd(r)\leq rd(q) \leq r 2^n
\end{equation*}
gives us
\begin{align*}
  \leq &\frac{\pi^2}{4}\left[r 2^n +
    \frac{4\cdot4.19253564643679\sqrt{12}99684 }{3}\sqrt{\kappa}
    \sqrt{\psi(q)}q^{\frac{11}{16}} \right. \\ &\left. {} +
    \frac{4\cdot 27 \cdot 4^{\frac{3}{16} +
        \frac{\varepsilon+\varepsilon^2}{2}}\sqrt{12} 3^{\frac{5}{2}}
      2^{\frac{7n}{2}}
      \left(\frac{1}{e\varepsilon}\right)^{1+\frac{\varepsilon}{2}}}{3}
    \sqrt{\kappa}\sqrt{\psi(q)}q^{\frac{11}{16} +
      \frac{\varepsilon+\varepsilon^2}{2}} +12\kappa q\psi(q)\right].
\end{align*}
Plugging into (\ref{lebesguemeasure})
\begin{align*}
  \lambda\left(\mathcal{B}'(\kappa,\psi)^c\right)\leq & \kappa
  \frac{\pi^2}{2}2^n \sum_{q=1}^{\infty} \frac{\psi(q)}{q}r \\ & {} +
  \kappa^{\frac32} \frac{2 \pi^2 \sqrt{12}}{3}
  \left(4.19253564643679\cdot 99684 \sum_{q=1}^{\infty}
  \psi(q)^{\frac32} q^{\frac{-5}{16}} \right.\\ &\quad\left.  {} +
  27\cdot
  4^{\frac{3}{16}+\frac{\varepsilon+\varepsilon^2}{2}}3^{\frac52}\left(\frac{1}{e\varepsilon}\right)^{1+\frac{\varepsilon}{2}}
  2^{\frac{7n}{2}} \sum_{q=1}^{\infty} \psi(q)^{\frac32}
  q^{\frac{-5}{16}+\frac{\varepsilon+\varepsilon^2}{2}} \right)\\ & {}
  + \kappa^2 6\pi^2 \sum_{q=1}^{\infty} \psi(q)^2.
\end{align*}

Let $\varepsilon=0.11237$, then
\begin{equation*}
  \sum_{q=1}^{\infty} \frac{\psi(q)}{q}r \leq
  \sqrt{\sum_{q=1}^{\infty} \psi(q)^2}\sqrt{\sum_{r=1}^{\infty}
    \sum_{t=1}^{\infty} r^{-2} t^{-2}}
  \leq\sqrt{S_{\psi}}\frac{\pi^2}{6},
\end{equation*}
and the other sums are $\leq S_{\psi}$. All together
$\lambda\left(\mathcal{B}'(\kappa, \psi)^c\right)$ is $<\delta$ for
\begin{align*}
  \kappa(\delta) < \min&\left\{
  \frac{\delta}{\sqrt{S_{\psi}}}\frac{4}{3\pi^4}\frac{1}{2^n} ,
  \left(\frac{\delta}{S_{\psi}}\right)^{\frac{2}{3}}\left(2\pi^2\sqrt{12}\Bigg[4.19253564643679\cdot
    99684 \right.\right. \\ &\quad \left.\left. {} + 27\cdot
    4^{\frac{3}{16}+\frac{0.11237+0.11237^2}{2}}3^{\frac52}\left(\frac{1}{e\cdot
      0.11237}\right)^{1+\frac{0.11237}{2}} 2^{\frac{7n}{2}}
    \Bigg]\right)^{\frac{-2}{3}}, \sqrt{\frac{\delta}{S_{\psi}}}
  \sqrt{\frac{1}{18\pi^2}} \right\} \\ \simeq
  \min&\left\{\frac{\delta}{\sqrt{S_{\psi}}} 0.0410639\frac{1}{2^n},
  \left(\frac{\delta}{S_{\psi}}\right)^{\frac23} 0.0598025
  \left(417928.72 + 2082.9292 \cdot
  2^{\frac{7n}{2}}\right)^{\frac{-2}{3}},
  \sqrt{\frac{\delta}{S_{\psi}}} 0.0750263\right\}.
\end{align*}
\hfill $\square$

\subsection{Proof of Theorem \ref{thmprime}} %--------PROOF FOR q PRIME--------
We go a bit more into detail here, since $q=p$ being a prime affects a
lot.  As in \S \ref{SectionAllq} (\ref{countingfunctionfirst}), we have
\begin{align*}
  \sum_{\substack{a\leq p \\ \left\lVert \frac{a^2}{p} \right\rVert
      \leq 3\kappa\psi(p)}} 1 \leq \frac{\pi^2}{4}
  \left(\sum_{j=1}^{J} \frac{J-j}{J^2} \sum_{a=1}^{p} \left( e^{2\pi
    j\frac{a^2}{p}} + e^{-2\pi i\frac{a^2}{p}}\right)
  +\frac{p}{J}\right).
\end{align*}
If $J < p$, then for $1\leq j\leq J$, $\gcd(j,p)$ is guaranteed to be
$1$, but if $J\geq p$, there is a chance, that $1\leq j\leq J$ is a
multiple of $p$. For $J\geq p$ we have
\begin{align*}
  \sum_{j=1}^{J} & \frac{J-j}{J^2}\sum_{a=1}^{p} \left( e^{2\pi i j
    \frac{a^2}{p}} + e^{-2\pi i j \frac{a^2}{p}} \right) + \frac{p}{J}
  \\ &\leq 1\cdot \sum_{\substack{j=1 \\ \gcd(j,p)=1}}^{J}
  \frac{J-j}{J^2} \sum_{a=1}^{p} \left( e^{2\pi i j\frac{a^2}{p}} +
  e^{-2\pi i j\frac{a^2}{p}} \right) + p \cdot
  \sum_{\substack{j_1=1\\ \gcd(j_1, \frac{p}{p})=1}}^{\frac{J}{p}}
  \frac{J-j_1 p }{J^2} \sum_{a=1}^{\frac{p}{p}} 2 + \frac{p}{J}
  \\ &\leq \sum_{\substack{j=1 \\ \gcd(j,p)=1}} \frac{J-j}{J^2}
  \sum_{a=1}^{p} \left( e^{2\pi i j\frac{a^2}{p}} + e^{-2\pi i
    j\frac{a^2}{p}} \right) + 1.
\end{align*}
In either case, to the expression 
\begin{equation*}
  \sum_{\substack{j=1 \\ \gcd(j,p)=1}}^{J} \frac{J-j}{J^2}
  \sum_{a=1}^{p} \left(e^{2\pi i j\frac{a^2}{p}} +e^{-2\pi i
    j\frac{a^2}{p}} \right)
\end{equation*}
apply first Abel's identity (Lemma \ref{Abel})
\begin{equation*}
  = \int_{1}^{J} \frac{1}{J^2}\sum_{\substack{j=1\\gcd(j,p)=1}}^{u}
  \sum_{a=1}^{p} \left( e^{2\pi i j\frac{a^2}{p}} +e^{-2\pi i
    j\frac{a^2}{p}} \right) \mathrm{d}u
\end{equation*}
then apply Lemma \ref{huanglemma1}
\begin{equation*}
  \leq \frac{2}{J^2}\sqrt{p} \int_{1}^{J} \sum_{\substack{j=1
      \\ \gcd(j,p)=1}}^{u} \left( \frac{j}{p}\right) \mathrm{d} u.
\end{equation*}
To estimate the sum of Dirichlet characters, we apply Lemma
\ref{lemmaMcGown} as well as Lemma \ref{logbound} for $\varepsilon =
0.249$ (Since $ \frac{10.0366}{(e 0.249)^{\frac14}} \geq
3.41105405758246$, we use the bigger constant in both the $p \leq
2\cdot 10^4$ and the $2\cdot 10^4 < p$ case):
\begin{equation*}
  \leq \frac{4\cdot 10.0366}{3 (e 0.249)^{\frac14}} J^{-\frac12}
  p^{\frac{11}{16} + \frac{0.249}{4}}.
\end{equation*}
Recall $J=\left\lfloor \frac{1}{6\kappa\psi(p)}\right\rfloor \neq 0$,
so $J^{-\frac12} \leq \sqrt{12 \kappa \psi(q)}$ and $\psi(p) \geq
p^{-\frac{5}{8}+\eta}$.  In total, we get
\begin{align*}
  \lambda\left(\mathcal{B}'(\kappa,\psi)^c \right) &\leq 2
  \kappa\sum_{p \text{ prime}} \frac{\psi(p)}{p} \sum_{\substack{a=1
      \\ \left\lVert \frac{a^2}{p} \right\rVert \leq 3\kappa\psi(p)}}
  1 \\ &\leq \kappa \frac{\pi^2}{2}\left( \frac{4\sqrt{12}\cdot
    10.0366}{3(e\cdot 0.249)^{\frac14}} \sqrt{\kappa}\sum_{p\text{
      prime}} \psi(p)^{\frac32} p^{-\frac{5}{16}+\frac{0.249}{4}} +
  \sum_{\substack{p\text{ prime } \\ p\leq \left\lfloor
      \frac{1}{6\kappa\psi(p)}\right\rfloor}} \frac{\psi(p)}{p} +
  12\kappa \sum_{\substack{p\text{ prime } \\ p > \left\lfloor
      \frac{1}{6\kappa\psi(p)}\right\rfloor}} \psi(p)^2 \right)
  \\ &\leq \kappa^{\frac32} \frac{\pi^2 2\sqrt{12}\cdot
    10.0366}{3(e\cdot 0.249)^{\frac14}} S_{\psi} + \kappa
  \frac{\pi^2}{2} S_{\psi}+ \kappa^2 6\pi^2 S_{\psi},
\end{align*}
which is $<\delta$ for
\begin{align*}
  \kappa(\delta) &< \min\left(\left(\frac{\delta (e\cdot
    0.249)^{\frac14}}{2 \pi^2\sqrt{12}\cdot 10.0366
    S_{\psi}}\right)^{\frac23}, \frac{2\delta}{3\pi^2 S_{\psi}},
  \sqrt{\frac{\delta}{18\pi^2 S_{\psi}}}\right)\\ &\simeq \min\left(
  \frac{\delta^{\frac23}}{S_{\psi}^{\frac23}} 0.0120432671,
  \frac{\delta}{S_{\psi}} 0.0675474558, \sqrt{\frac{\delta}{S_{\psi}}}
  0.0750263597 \right).
\end{align*}
\hfill $\square$

\section{Concluding remarks}

The results of this paper give quantitative Khintchine theorems for
convergence under four different conditions on the approximating
rationals. The varying $\kappa$ is very small indeed for the most
general of the results. The problem is in the explicit constants
arising from the various variants of Burgess' theorem applied, but
more specifically from the occurrence of the divisor function in these
settings. It occurs twice, once in the theorem for primitive
characters and once in the passage from primitive to general
characters. While a better estimate is desirable, it is in our opinion
unlikely to be attainable with the methods of this paper. 

\appendix %--------------------------------------APPENDIX--------------------------------------
\section{Python code}\label{SectionPython} %--------PYTHON CODE--------

\begin{python}
from sympy import kronecker_symbol
import math

FILENAME = "BurgessKonstantLogV2.txt"

# Read the last saved values from the log file so we can pick up where we left off:
with open(FILENAME, "r") as file:
last = file.readlines()[-2:]
qstart = int(last[0])
maks = float(last[1])

# Runs until manually interrupted with Ctrl+C
while True:  
	for q in range(qstart, qstart + 500):
		lower = math.floor(q ** (3 / 8))
		upper = math.ceil(q ** (5 / 8))
		q_factor = q ** (3 / 16)  # Constant for this q, so computed once outside the inner loop
		
		# Build the running sum up to lower:
		running_sum = sum(kronecker_symbol(N, q) for N in range(1, lower))
		
		# Check all N values in the relevant range [lower, upper]
		for N in range(lower, upper):
			running_sum += kronecker_symbol(N, q)  # Extend the sum by one term rather than recomputing from scratch
			VS = abs(running_sum)
			HS = math.sqrt(N) * q_factor
			maks = max(maks, VS / HS)  # Update the maximum if we found a larger value
	
	# After completing a batch of 500 q values, save progress to the log file:
	qstart += 500
	with open(FILENAME, "a") as file:
	file.write("\n" + str(qstart))  # Save where to start next time
	file.write("\n" + str(maks))  # Save the largest value found so far
\end{python}
Running this results in Lemma \ref{burgesssmallq} as well as the constant mentioned after Lemma \ref{lemmaMcGown}. Included is a snippet of the txt-file, \texttt{BurgessKonstantLogV2.txt}, the program produces (The first two lines containing zeroes were added manually before running the program).
\begin{python}
	0
	0
	500
	1.96397891158409
	1000
	2.29264220735264
	1500
	2.40876073940135
	2000
	2.51290888600514
	2500
	2.57511197155484
	3000
	2.66040431048477
	3500
	2.73314435040598
	4000
	2.75794837745823
	4500
	2.82596689219766
	5000
	2.82596689219766
	...
	19000
	3.39672051063381
	19500
	3.41105405758246
	20000
	3.41105405758246
	20500
	3.41105405758246
	21000
	3.41105405758246
	...
	99500
	4.19253564643679
	100000
	4.19253564643679
	100500
	4.20598018618317
	101000
	4.20598018618317
\end{python}

\bibliographystyle{siam} 
\bibliography{refskhintchine.bib}

\end{document}